\pgfplotsset{compat=newest}
\newtheorem{theorem}{Theorem}
\newtheorem{definition}[theorem]{Definition}
\newtheorem{proposition}[theorem]{Proposition}
\newtheorem{definition-proposition}[theorem]{Definition-Proposition}
\newtheorem{remark}[theorem]{Remark}
\renewcommand{\d}{\partial}
\renewcommand{\P}{\mathcal{P}_\theta}
\renewcommand{\r}{\rho}
\renewcommand{\L}{\Lambda}
\renewcommand{\a}{\alpha}
\renewcommand{\b}{\beta}
\newcommand{\au}{\alpha_{\bm{u}}}
\newcommand{\bu}{\beta_{\bm{u}}}
\newcommand{\D}{\Delta}
\newcommand{\s}{\sigma}
\newcommand{\g}{\gamma}
\newcommand{\G}{\Gamma}
\newcommand{\vG}{{\mathbf{\Gamma}}}
\newcommand{\vone}{{\mathbf{1}}}
\newcommand{\vc}{{\mathbf{c}}}
\newcommand{\vu}{{\mathbf{u}}}
\newcommand{\vp}{{\mathbf{p}}}
\newcommand{\vq}{{\mathbf{q}}}
\newcommand{\vh}{{\mathbf{h}}}
\newcommand{\F}{{\mathcal{F}}}
\newcommand{\vr}{{\bm{\rho}}}
\newcommand{\vg}{{\bm{\g}}}
\newcommand{\U}{\mathcal{U}}
\newcommand{\R}{\mathbb{R}}
\newcommand{\Nm}{N_{\max}}
\newcommand{\e}{\varepsilon}
\newcommand{\Linf}{L^\infty(0,T,\mathcal{U}_{ad})}
\newcommand{\pt}[1]{ \overset{\textbf{.}}{#1} }
\newcommand{\N}{\mathbb{N}}
\newcommand{\Om}{\Omega}
\newcommand{\der}{\mathrm{d}}
\newcommand{\bv}{\operatorname{BV}(0,T)}
\newcommand{\J}{{\mathcal{J}_\theta}}
\newcommand{\Jin}{J_{\text{in}}}
\newcommand{\Jout}{J_{\text{out}}}
\DeclarePairedDelimiter\abs{\lvert}{\rvert}%
\DeclarePairedDelimiter\norm{\lVert}{\rVert}%
\newcommand{\tnorm}[1]{{\left\vert\kern-0.25ex\left\vert\kern-0.25ex\left\vert #1 
    \right\vert\kern-0.25ex\right\vert\kern-0.25ex\right\vert}}
\let\oldabs\abs
\def\abs{\@ifstar{\oldabs}{\oldabs*}}
\let\oldnorm\norm
\def\norm{\@ifstar{\oldnorm}{\oldnorm*}}
\definecolor{zzttqq}{rgb}{0.6,0.2,0}
\definecolor{ffqqqq}{rgb}{1,0,0}
\definecolor{zzqqzz}{rgb}{0.6,0,0.6}
\definecolor{ffwwqq}{rgb}{1,0.4,0}
\definecolor{qqwwff}{rgb}{0,0.4,1}
\definecolor{qqzztt}{rgb}{0.0000,0.60000,0.20000}
\definecolor{zzttqq}{rgb}{0.60000,0.20000,0.0000}
\definecolor{zzqqzz}{rgb}{0.60000,0.0000,0.60000}
\definecolor{ffwwqq}{rgb}{1.0000,0.40000,0.0000}
\definecolor{qqwwff}{rgb}{0.0000,0.40000,1.0000}
\definecolor{ffqqqq}{rgb}{1.0000,0.0000,0.0000}
\definecolor{cqcqcq}{rgb}{0.75294,0.75294,0.75294}
\title{\bf Optimal scenario for road evacuation in an urban environment}
\author[1]{Mickael Bestard\footnote{\texttt{\small mickael.bestard@math.unistra.fr}} }
\author[2]{Emmanuel Franck \footnote{ \texttt{\small emmanuel.franck@inria.fr}} }
\author[1]{Laurent Navoret \footnote{ \texttt{\small laurent.navoret@math.unistra.fr}} }
\author[3,4]{Yannick Privat \footnote{\texttt{\small  yannick.privat@univ-lorraine.fr}} }
\affil[1]{IRMA, Universit\'e de Strasbourg, CNRS UMR 7501, Inria, 7 rue Ren\'e Descartes, 67084 Strasbourg, France. }
\affil[2]{Inria, IRMA, Universit\'e de Strasbourg, CNRS UMR 7501, 7 rue Ren\'e Descartes, 67084 Strasbourg, France. }
\affil[3]{Universit\'e de Lorraine, CNRS, Institut Elie Cartan de Lorraine, Inria, BP 70239 54506 Vand\oe uvre-l\`es-Nancy Cedex, France. }
\affil[4]{Institut Universitaire de France (IUF)}
\begin{document}
\maketitle

\begin{abstract}
    How to free a road from vehicle traffic as efficiently as possible and in a given time, in order to allow for example the passage of emergency vehicles? We are interested in this question which we reformulate as an optimal control problem. We consider a macroscopic road traffic model on networks, semi-discretized in space and decide to give ourselves the possibility to control the flow at junctions. Our target is to smooth the traffic along a given path within a fixed time. A parsimony constraint is imposed on the controls, in order to ensure that the optimal strategies are feasible in practice.

    We perform an analysis of the resulting optimal control problem, proving the existence of an optimal control and deriving optimality conditions, which we rewrite as a single functional equation. We then use this formulation to derive a new mixed algorithm interpreting it as a mix between two methods: a descent method combined with a fixed point method allowing global perturbations. We verify with numerical experiments the efficiency of this method on examples of graphs, first simple, then more complex. We highlight the efficiency of our approach by comparing it to standard methods. We propose an open source code implementing this approach in the Julia language.
\end{abstract}

\noindent\textbf{Keywords:} Traffic network, Optimal Control, Fluid model, Hyperbolic PDE, optimization methods
\medskip

\noindent\textbf{AMS classification:} 35Q49, 65M08, 49K15, 49K30.


\section{Introduction}
\label{sec:intro}
\paragraph{Road traffic modeling.}
With the concentration of populations in cities where transport flows are constantly increasing, road traffic modeling has become a central issue in urban planning. Whether it is to configure traffic lights, to adapt public transport offers or to manage in an optimal way situations of high congestion. The knowledge of the behavior of a road network is an important issue for the management of a crisis in an urban environment. It is indeed necessary to quickly predict traffic, especially traffic jams, in order to organize rescue operations. Before creating decision support tools for crisis management involving road traffic, a first step is to develop a method for monitoring road traffic. 

In this work, we will focus on the evacuation of a traffic lane in finite time. It can be, for example, an axis that we wish to free in order to provide access to first aid. We approach this question in the form of an optimal control problem of a graph where each edge corresponds to a traffic lane. The main objective of this work is the determination of a prototype algorithm answering this question.



\paragraph{State of the art.} Depending on the situation of interest, several types of approaches co-exist to model this phenomenon, depending on the scale and the desired accuracy.
 At the microscopic scale, trajectories (speed and position) of each vehicle can be described through Lagrangian microscopic models such as car-following models~\cite{Gipps81, Wiedemann74, Treiber2000} or cellular automaton models~\cite{Biham92, Krug88, Nagel92}. Kinetic traffic models were also introduced in the 60's~\cite{Coscia07, delitala07, Fermo14, Materne02, Paveri75, Prigogine62,  Wegener96} and deal with vehicles on a mesoscopic scale in the form of distributions in position-velocity space. 
 
Finally, on a macroscopic scale, the vehicle flow is then considered as a continuous medium. This is the case of the famous model introduced by Lighthill, Whitham and Richards (denoted LWR in what follows)~\cite{Lighthill55, Richards56}. We will use it in the rest of this article, since the continuous description of the flow is particularly well adapted to the writing of a dynamic programming problem.
It is notable that this first order model introduced in the 50's was later extended to Euler-type systems with viscosity, of second order~\cite{payne71, whitham2011, Aw2000, Zhang02}. 

In this paper, we introduce and analyze an optimal control problem modeling an evacuation scenario for an axis belonging to a road network.  A close control model has already been introduced in~\cite{Gugat05}, but under restrictive assumptions about the flow regime in order to avoid congestion phenomena, which is what we want to avoid. 
It has been in particular highlighted that the flow may be not invertible 
with respect to control parameters, meaning that density at the nodes may not be reconstructed from boundary conditions.
In~\cite{goatin_speed_2016}, a method to manage variable speed limits combined with coordinated ramp metering within the framework of the Lighthill-Whitham-Richards (LWR) network model has been introduced. They consider a "first-discretize-then-optimize" numerical approach to solve it numerically.
Note that the optimal control problem we deal with in the following has some similarities with the one proposed by the authors of this article, but differs in several respects: our control variables aim to control not just the ramps metering, but an entire axis.
Furthermore, our approach to solving the underlying optimal control problem is quite different, in particular our use of optimality conditions leading to an efficient solution algorithm.


In~\cite{reilly_adjoint-based_nodate}, a similar control problem is discussed, which is addressed using a piecewise-linear flux framework driving to a much simplified adjoint system involving piecewise constant Jacobian matrices in time and space. This approach enables faster gradient computation, allowing them to tackle real-world simulations related to ramp-metering configuration. In our study, we opted to use the model introduced in~\cite{Coclite05} and dealt with the non-linear flux using automatic differentiation techniques to compute the gradient.
This kind of approach has also been used in~\cite{bayen_adjoint-based_2006} for air traffic flow with a modified LWR-based network model.

We also mention several contributions aiming at dealing with concrete or real time cases. The paper~\cite{fugenschuh_combinatorial_2006} deals with this kind of control problem for simplified nonlinear (essentially without considering congestions) and linear formulations of the model, considering large scale networks. 
In~\cite{kurzhanskiy_active_2010}, an algorithm for real-time traffic state estimation and short-term prediction is introduced.

Several other close control problems have been investigated in the past: in \cite{gottlich_modeling_2015}, the authors use switching controls to deal with traffic lights at an 8$\times$4 junction in a piecewise linear flow framework.
The paper~\cite{ancona_optimization_2018} deals with an optimal control problem of the same kind, formulated from the continuous LWR model, solved numerically using an heuristic random parameters search.
We refer to \cite{ancona_analysis_2017,bayen_control_2022} for a broad view on this topic.

Regarding now controlled microscopic models, let us mention \cite{Malena22} where an approach based on model predictive control (MPD) is developed and \cite{Baumgart22} using a reinforcement learning algorithm. In the book \cite{Treiber13}, the last chapter is devoted to the modeling of various optimization problems related to road traffic. There has also been some research into the optimal switching of the traffic lights to maximize the traffic flow using a mixed-integer model~\cite{GOTTLICH201536}. Other works have looked at traffic regulation by comparing and discussing the use of lights and circles~\cite{chitour_traffic_2005}.

Concerning now the control of mesoscopic and macroscopic models, there are software contributions seeking in particular to bring real time answers and guidance tools in case of accident. Let us also mention \cite{ Jayakrishnan94,Leonard89} dealing in part with traffic prediction and control problems. 

This article uses a controlled model close to the one studied in \cite{Gugat05} which is devoted to the control of the macroscopic LWR model on a directed graph using the framework introduced in \cite{Coclite05}. However, we are interested in taking into account some properties of the model that were not addressed in \cite{Gugat05}, such as congestion phenomena.

Hence, we will not focus on individual paths since we are interested in the overall dynamics with a focus on congestion. Particular attention is paid to the modeling of junctions as they introduce a nonlinear coupling between roads.

\paragraph{Main objectives.}
Our goal is to design a numerical method to tackle the control the flow of vehicles in all fluid regimes, including saturated regimes/congestions, and using dynamical barriers at each road in the graph. 

We thus propose a road traffic model in the form of a controlled graph at each junction. This models an urban area structured by roads. For practical reasons, even if we wish to take into account possible congestion/shocks in the system, we position ourselves in a differentiable framework by adding weak diffusion terms in the system, through a semi-discretization in space. Hence, the resulting problem consists in minimizing the vehicle density at the final time on a given route under ODE constraint, obtained by finite volume semi-discretization of the LWR system. We then enrich this problem with a constraint on the number of simultaneously active roadblocks in order to take into account staffing issues. We use this analysis to build a new algorithm adapted to this problem, mixing a classical gradient approach with a fixed-point method allowing non-local perturbations. One of the advantages of this approach is that the constructed sequence can escape from a minimum well and survey a larger part of the admissible set than a standard gradient type algorithm.

\paragraph{Plan of this article.} 
While the macroscopic model describing the flow is very standard (Lighthill-Whitham-Richards, 1955) the problem of flow distribution at junctions, inspired by more recent works~\cite{GRB07,TFN}, realizes a nonlinear coupling between the different edges. 
The distribution of vehicles at junctions is thus modeled by an optimal allocation process that depends on the maximum possible flows at the junctions, through a linear programming (LP) problem targeting the maximization of incoming flows. In order to influence the traffic flow, we introduce control functions that are defined at each road entrance and acts as barriers by weighting the capacity of a road leaving a junction to admit new vehicles. 

Section~\ref{sec:roads} is devoted to the description of the traffic flow model we consider, and to the optimal control problem modeling the best evacuation scenario for a road. 

After investigating the existence of optimal control in Section~\ref{sec:wellposedness}, we then derive necessary optimality conditions that we reformulate in an exploitable way in Section~\ref{sec:optimCond}. We hence introduce an optimization algorithm in Section~\ref{sec:num_method}, based on a hybrid combination of a variable step gradient method and a well-chosen fixed point method translating the optimality conditions and allowing global perturbations. Finally, in Section~\ref{sec:num_results}, we illustrate the efficiency of the introduced method using examples of various graphs modeling, in particular, a traffic circle or a main road surrounded by satellite roads. 

Our contribution can be summarized in the form of a computational open source code allowing to process complex graphs, but nevertheless of rather small dimension. This code, written in the Julia language,  is available and usable at the following link:

\begin{center}
    \url{https://github.com/mickaelbestard/TRoN.jl}
\end{center}

\paragraph{Notations.} Throughout this article, we will use the following notations:
\begin{itemize}
\item For $x\in \R^n$, $x_+$ will denote the positive part of $x$, namely $x_+=\max\{x,0_{\R^n}\}$, the max being understood component by component;
\item $\bv$ denotes the space of all functions of bounded variation on $(0,T)$;
\item $W^{1,\infty}(0,T)$ denotes the space of all functions $f$ in $L^\infty (0,T)$ whose gradient in the sense of distributions also belongs to $L^\infty(0,T)$;
\item $\Vert \cdot \Vert_{\R^d}$ (resp. $\langle \cdot,\cdot\rangle_{\R^d}$) stands for the standard Euclidean norm (resp. inner product) in $\R^d$;
\item Let $F:\R^p\times \R^q\to \R^m$ and $(x_0,y_0)\in \R^p\times \R^q$. We will denote by $\d_x F(x_0,y_0)$, the Jacobian matrix of the mapping $x\mapsto F(x,y_0)$. When no ambiguity is possible, we will simply denote it by $\d_x F$;
\item $N_r$: number of roads in the model;
\item $N_c$ number of mesh cells per road for the first-order Finite Volumes scheme;
\item $N_T$: number of time discretization steps in the numerical schemes;
\end{itemize}

\section{A controlled model of trafic flow}


In this section, we first present the traffic flow model on a road network and its semi-discretization. Then we introduce the precise control model:  the control at junctions will translates regulation action using traffic signs or traffic lights.

\subsection{Traffic dynamics on network without control}\label{sec:roads}

The road network is a directed graph  of $N_r$ roads, with $N_r\in \N^*$, where each edge corresponds to a road and each vertex to a junction. The roads will all be denoted as real intervals $[a_i,b_i]$ for some $i\in \llbracket 1,N_r\rrbracket$. Of course, this writing does not determine the topology of the graph. It is necessary to define, for each junction, the indices of the incoming and outgoing roads in order to characterize the directed graph completely. 
 

 We are interested in the time evolution of the densities on each road. On the $i$-th road, the evolution of the density $\rho_i(t,x)$ 
is provided by the so-called LWR model
\begin{equation}
     \tag{LWR}
   \begin{cases}
        \d_t \rho_i(t,x) + \d_x f_i(\rho_i(t,x)) = 0 , & (t,x) \in (0,T)\times [a_i, b_i], \\ 
        \rho_i(0,x) = \rho_i^0(x), & x\in[a_i, b_i], \\
        f_i(\rho_i(t,a_i)) = \g^L_i(t), & t\in (0,T), \\
        f_i(\rho_i(t,b_i)) = \g^R_i(t), & t\in (0,T),
    \end{cases}
    \label{lwr-c}
\end{equation}
where the flux $f_i(\r)$ is given by
\begin{equation}\label{eq:fluxlwr}
    f_i(\r) = \r\,v_i^{\max}\left(1-\frac{\r}{\r_i^{\max}}\right), 
\end{equation}
where $v_i^{\max}$ and $\r^{\max}_i$ denotes respectively the maximal velocity and density allowed on the $i$-th road. We will consider without loss of generality that $\r_i^{\max} = v_i^{\max} = 1$. The initial density is denoted $\rho_i^0(x)$ and $\vg_i^L$ and $\vg_i^R$ are  functions allowing to prescribe the flux at the left and right boundaries of the interval. 


The fluxes $\vg$ at junctions are determined using the model introduced in~\cite{Coclite05}. Considering a junction $J$,  the sets of indices corresponding to the in-going and out-going roads are denoted $\Jin$ and $\Jout$. We assume that there is a statistical behavior matrix $A_J$ defined by
\begin{equation*}
	A_J := \left(\a_{ji}\right)_{(j,i) \in \Jout \times \Jin},\quad 0 < \a_{ji} < 1,\quad \sum_{j \in \Jout}\a_{ji}=1,
\end{equation*}
with $\a_{ji}$ the proportion of vehicles going to the $j$-th outgoing road among those coming from the $i$-th ingoing road. Outgoing fluxes have to satisfy the relation 
\begin{equation}(\g_j^L)_{j \in \Jout} =  A_J (\g_i^R)_{i \in \Jin},
\label{eq:relation_fluxes}
\end{equation} and we have  the balance between in-going and out-going fluxes: 
\begin{equation*}
	\sum_{i \in \Jin} \g_i^R = \sum_{j \in \Jout} \g_j^L.
\end{equation*}
Let us note that we have natural constraints on the fluxes. Indeed the LWR flux function $f_i$ reaches its maximum at $\rho_i^{\max}/2$. Therefore, ingoing fluxes $\g_i^R$ have to be smaller than the following upper bound:
\begin{equation*}\label{gin}
	\g_i^{R,\max} = f_i(\min\{\r_i(t,b_i),\rho_i^{\max}/2\}),
\end{equation*}
which takes into account reduced capacities when the density is lower than $\rho_i^{\max}/2$ in the ingoing road. Similarly outgoing fluxes $\g_j^L$ have to be smaller than the  upper bound:
\begin{equation*}\label{gout}
	\g_j^{L,\max} = f_j(\max\{\r_j(t,a_j),\rho_j^{\max}/2\}),
\end{equation*}
which takes into account reduced capacities when the density is larger than $\rho_i^{\max}/2$ in the outgoing road. We refer to \cite{Coclite05} for details. Consequently, the fluxes belong to following set:
\begin{equation*}
\Om_J = \left\{ \left((\g_i^R)_{j \in \Jin},(\g_j^L)_{i \in \Jout}\right) \in \prod_{i \in \Jin} \left[0,\g_i^{R,\max}\right] \prod_{j \in \Jout} \left[0,\g_j^{L,\max}\right] \quad \text{with} \quad (\g_j^L)_{j \in \Jout} =  A_J (\g_i^R)_{i \in \Jin} \right\}
\end{equation*}
Then we assume that drivers succeed in maximizing the total flow. Consequently the fluxes are solution to the following Linear Programming (LP) problem: 
\begin{equation}\label{pb:mainLP}
    \max_{ \left((\g_i^R)_{j \in \Jin},(\g_j^L)_{j \in \Jout}\right)\in\Om_J}  \ \sum_{i \in \Jin} \g_i^R,
\end{equation}
This definition is valid at any junction of the network. Note that this problem does not have necessarily a unique solution in the case where there are more incoming roads than outgoing roads. In that case, a priority modeling assumption should be added to select one solution. We refer to \cite{bretti2006numerical} for more details. In the sequel, we suppose that this linear programming problem have a unique solution solution and we formally write:
\begin{equation*}
\vg(t) = \phi^{LP}(\vr(t)), 
\end{equation*}
for the simultaneous resolution of the linear programming problems at all the junctions of the network. Note that the dependency on $\vr(t)$ results from the definition of the upper bounds involved in the definition of the sets $\Om_J$. 

In the following, we will only consider networks with at most two ingoing roads and two outgoing roads, i.e. either $1\times 1$, $1\times 2$, $2\times 1$ or $2\times 2$ junctions. This gives existence and stability of the LWR Cauchy problem \cite{Coclite05}, and has the advantage that the function $\phi^{LP}$ can be explicitly provided \cite{Shi16} while it already enables to model a huge variety of road networks.

\subsection{Control at junctions}\label{sec:introCont}

We introduce the vector of controls $\mathbf{u}= \left(u_i(\cdot)\right)_{1\leq i\leq N_r}\in L^\infty([0,T],\R^{N_r})$ at every road entrance. Note that we implicitly assume that every road junction is controlled, but our model allows without any difficulty to neutralize some controls in order to model the fact that only some junctions are controlled.

We interpret each control $u_j$ as a rate, assuming that at each time $t\in(0,T)$, the maximum flow out a junction and going into road $j$ is weighted by a coefficient $u_j(t)\in [0,1]$.
This allow in particular to keep valid all well-posedness considerations mentioned in \cite{Coclite05}. At junction $J$, we thus define the following polytope of constraints 
\begin{align*}
\Om_J(\vu)  &= \Big\{ \left((\g_i^R)_{j \in \Jin},(\g_j^L)_{j \in \Jout}\right) \in \prod_{i \in \Jin} \left[0,\g_i^{R,\max}\right]   \prod_{j \in \Jout} \left[0,(1-u_j)\g_j^{L,\max}\right]  \\
&\hspace{8cm}\quad \text{with}\quad \ (\g_j^L)_{j \in \Jout} =  A_J (\g_i^R)_{i \in \Jin} \Big\}
\end{align*}
This initial model is not completely satisfactory since a full control on one outgoing road would result on zero outgoing fluxes for all the ingoing roads. Indeed relation \eqref{eq:relation_fluxes} implies that ingoing fluxes are linear combination of outgoing fluxes with positive weights. This is definitely not the desired behavior as we would expect that the trafic flow would be deviated to the uncontrolled outgoing roads. To solve this problem, we choose to make the statistical behavior matrix $A_J$ also dependent on the control $\mathbf{u}$. More precisely, we ask that 
\begin{equation*}
\alpha_{ji}(\mathbf{u}) = 0, \text{ if }u_j = 0.
\end{equation*} 
In Appendix~\ref{append:modelCont}, we provide explicit constructions of such functions for $2\times 1$ and $2\times 2$ junctions. $1\times 1$ and $1\times 2$ junctions do not require such modification.

Like in the previous section, the simultaneous resolution of the Linear Porgramming problems at each junctions junction is now denoted:
\begin{equation*}
    \vg = \phi^{LP}(\vr,\vu)
\end{equation*}
The explicit expressions of $\phi^{LP}$ in the set of cases we deal with are provided in Appendix~\ref{append:modelCont}. Beyond the explicit expression of $\phi^{LP}(\vr,\vu)$, what matters is that $\phi^{LP}$ is a Lipschitz function with respect to $(\vr,\vu)$. In the following, in order to use tools of differentiable optimization, we will consider a $C^1$ approximation of this function. This issue is commented at the end of Appendix~\ref{append:modelCont}. According to these comments, we will assume from now on:
\begin{equation}\label{hyp:phiLP}\tag{H$_{\phi^{LP}}$}
\text{The function $\phi^{LP}$ is Lipschitz, and $C^1$ with respect to its second variable $\vu$. }
\end{equation}

\subsection{Semi-discretized model} For algorithmic efficiency reasons, we prefer to be able to define the sensitivity of the different data of the problem with respect to the control. Since the model \eqref{lwr-c} is known to generate possible irregularities in the form of shocks, we have decided to introduce regularity through a semi-discretization of the model in space.
The relevance of this choice will be discussed in the concluding section.

Hence, we discretize the model~\eqref{lwr-c} with a first-order Finite Volume (FV) scheme. We consider $N_c\in \N^*$ mesh cells per road: the discretization points on the $i$-th roads are denoted $a_i = x_{i,1/2} < x_{i,3/2} < \ldots < x_{i,N_c-1/2} = b_i$ and the space steps $\D x_{i,j} = x_{i,j+1/2} - x_{i,j-1/2}$.  
Then the discrete densities and the boundary fluxes are denoted:
\begin{align*}
    \vr(t) &= \big(\r_{i,j}(t)\big)_{\substack{1\leq i\leq N_r\\ 1\leq j\leq N_c}} = \left( \frac{1}{\D x_{i,j}}\int_{x_{i,j-\frac{1}{2}}}^{x_{i,j+\frac{1}{2}}}\r(t,x)\,\der x \right)_{\substack{1\leq i\leq N_r\\ 1\leq j\leq N_c}},\\
    \vg(t) &= \left( \gamma_1^L(t), \gamma_1^R(t),\ldots, \gamma_{N_r}^L(t), \gamma_{N_r}^R(t)\right).
\end{align*}
With a slight abuse of notation, we have written similarly the discrete variable as the continuous one, since we will essentially work on the semi-discretized model. Then the semi-discretized dynamics is given by the differential equation
\begin{equation}
    \label{eq:LWR-sd}\tag{LWR-sd}
    \begin{cases}
        \dfrac{\der\vr}{\der t}\big(t\big) = f^{FV}\left(\vr(t),\vg(t)\right), & t\in(0,T) \\
		\vg(t) = \phi^{LP}\left(\vr(t), \mathbf{u}(t))\right), & t\in(0,T) \\
		\vr(0) = \vr_0, & 
    \end{cases}
\end{equation}
where 
the finite volume flow at the $j$-th mesh cell of the $i$-th road reads
\begin{equation}\label{def:fFV}
    f^{FV}(\vr,\vg)_{ij} = \begin{cases}
        \frac{-1}{\D x_{ij}}\left(\F_i(\r_{i,j},\r_{i,j+1}) -  \g_i^L\right) ,& \text{if } j=1, \\ 
        \frac{-1}{\D x_{ij}}\left(\F_i(\r_{i,j},\r_{i,j+1}) - \F_i(\r_{i,j-1},\r_{i,j})\right) ,& \text{if } 2\leq j\leq N_c-1, \\ 
        \frac{-1}{\D x_{ij}}\left(\g_i^R - \F_i(\r_{i,j-1},\r_{i,j})\right) ,& \text{if } j=N_c, \\ 
    \end{cases}
\end{equation}
with $\F_i(u,v)$ the so-called local Lax-Friedrich numerical flux given by
\begin{equation*}
    \F_i(u,v) = \frac{f_i(u) + f_i(v)}{2} - \max{\left\{|f_i'(u)|, |f_i'(v)|\right\}}\frac{(v - u)}{2}.
\end{equation*}
We refer to \cite{Leveque92} for an introduction to Finite Volume approximations.

\subsection{Conclusion: an optimal control problem}


We are interested in an approximate controllability problem which consists in emptying a given route \emph{as much as possible} for a fixed end time $T>0$.  Let us introduce $\chi_{\text{path}}\subset \llbracket 1,N_r\rrbracket$, a set of indices corresponding to the route we wish to empty in a time $T$.

We would like to minimize a functional with respect to the control $\vu$, representing the sum of all the densities on this path, in other words
\begin{align*}
  C_T(\vu) &= \sum_{i\in\chi_{\text{path}}} \sum_{j=1}^{N_c}\r_{i,j}(T;\vu) = \vc\cdot \vr(T;\vu),
\end{align*}
where $\vr(t;\vu)$ denotes the solution to the controlled system~\eqref{eq:LWR-sd} and $\vc=(c_i)_{1\leq i\leq N_r}$ the vector defined by
$$
c_i=1 \text{ if }i\in \chi_{\text{path}}\quad \text{and}\quad c_i=0\text{ else}.
$$ 


Of course, it is necessary to introduce a certain number of constraints on the sought controls, in accordance with the model under consideration, and to model that the obtained control is feasible in practice. We are thus driven to consider the following constraints:
\begin{enumerate}[(i)]
\item $0\leq u_i(\cdot) \leq 1$, meaning that at each time, the control is a vehicle acceptance rate on a road;
\item $\sum_{i=1}^{N_r} u_i(\cdot) \leq N_{\max}$: we impose at each time a maximum number of active controls in order to take into account the staff required for roadblocking. 
\item Regularity: we will assume that $\vu$ is of bounded variation, and write $\vu \in \operatorname{BV}([0,T];\R^{N_r})$.
This constraint involves the {\it total variation}\footnote{Given a function $f$ belonging to $L^1([0,T])$, the total variation of $f$ in $[0,T]$ is defined as
$$
\operatorname{TV}(f)= \sup \left\{ \int_0^T  f(t) \phi' ( t ) \, dt , \ \phi \in C_c^1 ([0,T]), \  \Vert \phi \Vert_{L^\infty ( [0,T] )} \leq 1 \right\}.
$$
} of the control and models that the roadblock is supposed not to "blink" over time. From a control point of view, we aim at avoiding the so-called {\it chattering phenomenon}.
\end{enumerate}
The first constraint above will be included in the set of admissible controls. Concerning the other two constraints, we have chosen to include them as penalty/regularization terms in the functional. Of course, other choices would be quite possible and relevant.

Let $N_{\max}\in \N^*$ be an integer standing for the maximal number of active controls for this problem and $\theta=(\theta_S, \theta_B)\in \R^2$ be two non-negative parameters.
According to all the considerations above, the optimal control problem we will investigate reads:
\begin{equation}\label{eq:control} \tag{$\P$}
\boxed{\inf_{\vu \in \U_{ad}}{\J(\vu)},}
\end{equation}
where the admissible set of controls is defined by 
\begin{equation}\label{def:Uad}
\U_{ad}=L^\infty([0,T],[0,1]^{N_r}),
\end{equation}
and the regularized cost functional $\J$ writes
\begin{equation}
    \J(\vu) = C_T(\vu) + \frac{\theta_S}{2} S(\vu) + \theta_B B(\vu),
\end{equation}
where $S(\vu)$ denotes the regularizing term modeling the limitation on the number of active controls and $B(\vu)$ is the total variation of $\vu$ in time, namely 
\begin{equation}\label{eq:staffing}
    S(\vu) = \int_0^T\left(\sum_{i=1}^{N_r} u_i(t)-\Nm\right)_+^2\,dt \quad \text{and}\quad  B(\vu) = \sum_{i=1}^{N_r}\operatorname{TV}(u_i).
\end{equation}

\section{Analysis of the optimal control problem \eqref{eq:control}}
\label{sec:control}

In this section, we will investigate the well-posedness and derive the optimality conditions of Problem~\eqref{eq:control}. These conditions will form the basis of the algorithms used in the rest of this study. 

\subsection{Well-posedness of Problem~\eqref{eq:control}}\label{sec:wellposedness}

We follow the so-called direct method of calculus of variations. The key point of the following result is the establishment of uniform estimates of $\vr$ with respect to the control variable $\vu$, in the $W^{1,\infty}(0,T)$ norm. 

\begin{theorem}\label{theo:existOCP}
Let us assume that $\theta_B$ is positive. Then, Problem~\eqref{eq:control} has a solution.
\end{theorem}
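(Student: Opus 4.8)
The plan is to run the direct method of the calculus of variations, the decisive ingredient being that the total-variation penalization (which is active precisely because $\theta_B>0$) turns the mere boundedness of controls in $L^\infty$ into strong $L^1$-compactness; this is what will let me pass to the limit through the nonlinearity $\phi^{LP}$. First I would check that $\J$ is bounded below: establishing the invariant-region property $\rho_{i,j}(t;\vu)\in[0,1]$ for the monotone finite volume field \eqref{def:fFV} gives in particular $\vr(T;\vu)\ge 0$, hence $C_T(\vu)=\vc\cdot\vr(T;\vu)\ge 0$; since also $S\ge 0$ and $B\ge 0$, we get $\J\ge 0$ and $m:=\inf_{\U_{ad}}\J\in[0,+\infty)$. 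I then fix a minimizing sequence $(\vu_n)\subset\U_{ad}$ with $\J(\vu_n)\to m$; for $n$ large $\J(\vu_n)\le m+1$, and nonnegativity of each term yields $\theta_B B(\vu_n)\le m+1$, so that $\sum_i \operatorname{TV}(u_{n,i})$ is bounded uniformly in $n$. Combined with $0\le u_{n,i}\le 1$, the sequence is bounded in $\operatorname{BV}(0,T)^{N_r}$.

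By Helly's selection theorem, up to a subsequence (not relabeled) $\vu_n\to\vu^\star$ in $L^1(0,T)^{N_r}$ and pointwise a.e., with $\vu^\star\in\operatorname{BV}(0,T)^{N_r}$. The a.e. convergence preserves the box constraint, so $0\le u_i^\star\le 1$ a.e. and $\vu^\star\in\U_{ad}$. It then remains to prove $\J(\vu^\star)\le m$, which reduces to controlling each of the three terms along the subsequence.

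The technical heart, and the step the authors flag, is the continuous dependence of the state on the control, which rests on the announced uniform $W^{1,\infty}(0,T)$ estimate. Using the invariant-region bound $\vr(\cdot;\vu_n)\in[0,1]$ together with \eqref{hyp:phiLP} (so that $(\vr,\vu)\mapsto f^{FV}(\vr,\phi^{LP}(\vr,\vu))$ is bounded on this region), the derivatives $\tfrac{d}{dt}\vr(\cdot;\vu_n)$ are uniformly bounded; hence the states are equi-Lipschitz and uniformly bounded, and by Arzel\`a--Ascoli a further subsequence converges uniformly to some $\vr^\star$. I would then pass to the limit in the integral form
\[
\vr(t;\vu_n)=\vr_0+\int_0^t f^{FV}\!\big(\vr(s;\vu_n),\phi^{LP}(\vr(s;\vu_n),\vu_n(s))\big)\,ds :
\]
the integrand converges a.e.\ (uniform convergence of $\vr(\cdot;\vu_n)$, a.e.\ convergence of $\vu_n$, continuity of $f^{FV}$ and of $\phi^{LP}$) and stays uniformly bounded, so dominated convergence identifies $\vr^\star$ as the solution of \eqref{eq:LWR-sd} associated with $\vu^\star$, by uniqueness of that ODE. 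In particular $C_T(\vu_n)=\vc\cdot\vr(T;\vu_n)\to\vc\cdot\vr(T;\vu^\star)=C_T(\vu^\star)$.

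To conclude I combine this with the behaviour of the two penalties: $B$ is lower semicontinuous for the $L^1$ topology (lower semicontinuity of the total variation), while $S$ even passes to the limit by dominated convergence, its integrand $(\sum_i u_i-\Nm)_+^2$ being continuous and bounded on $[0,1]^{N_r}$. Therefore
\[
\J(\vu^\star)\le\liminf_n\Big(C_T(\vu_n)+\tfrac{\theta_S}{2}S(\vu_n)+\theta_B B(\vu_n)\Big)=\liminf_n\J(\vu_n)=m,
\]
and since $\vu^\star\in\U_{ad}$ this forces $\J(\vu^\star)=m$, so $\vu^\star$ is a minimizer. I expect the main obstacle to be exactly the nonlinear dependence of the dynamics on $\vu$ through $\phi^{LP}$: weak-$\star$ $L^\infty$ compactness, all one obtains from the box constraint alone, does not survive this nonlinearity, which is why $\theta_B>0$ is indispensable — it supplies, via Helly, the strong $L^1$/a.e.\ compactness. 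The supporting difficulty is establishing the uniform $W^{1,\infty}$ bound (invariant region plus a bounded velocity field), without which the state map would not be continuous.
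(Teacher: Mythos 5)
Your proposal is correct and follows essentially the same route as the paper: direct method, a uniform $\operatorname{BV}$ bound from the penalization $\theta_B B(\vu_n)\leq C$ giving strong $L^1$/a.e.\ compactness of the controls (Helly), a uniform $W^{1,\infty}$ bound on the states, Arzel\`a--Ascoli, passage to the limit in the integral form of \eqref{eq:LWR-sd}, and lower semicontinuity of the penalty terms. The only cosmetic difference is in the state estimate: the paper derives the uniform $L^\infty$ bound on $\vr_n$ via a Gr\"onwall-type energy inequality on $t\mapsto\frac12\Vert\vr(t)\Vert^2$ (though it too invokes monotonicity of the local Lax--Friedrichs scheme midway), whereas you appeal directly to the invariant region $[0,1]$ and boundedness of the vector field there, which is a legitimate shortcut consistent with the paper's own assumptions.
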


\begin{proof}
    Let $(\vu_n)_{n\in \N}$ be a minimizing sequence for Problem~\eqref{eq:control}. Observe first that, by minimality, the sequence $(\J(\vu_n))_{n\in \N}$ is bounded, and therefore, so is the total variation $(B(\vu_n))_{n\in \N}$. 
We infer that, up to a subsequence, $(\vu_n)_{n\in \N}$ converges in $L^1(0,T)$ and almost everywhere toward an element $\vu_*\in \bv$. Since the class $\U_{ad}$ is closed for this convergence, we moreover get that $\vu_*\in L^\infty([0,T],[0,1]^{N_r})$.

For the sake of readability, we will denote similarly a sequence and any subsequence with a slight abuse of notation.

In what follows, it is convenient to introduce the function $g:\R^{N_c\times N_r}\times [0,1]^{N_r}$ defined by $g(\vr,\vu) = f^{FV}(\vr,\phi^{LP}(\vr,\vu))$, so that $\vr$ solves the ODE system $\vr' = g(\vr,\vu)$. Let us set $\vr_n := \vr(\cdot\,;\vu_n)$. 

\paragraph{Step 1: the function $g$ is continuous.}
Indeed, note that the function $\phi^{LP}$ is continuous (and even Lipschitz), according to \eqref{hyp:phiLP}. We also refer to the comments of modeling issues at the end of Section~\ref{sec:introCont}.  According to \eqref{def:fFV}, $f^{FV}$ has therefore the same regularity as the numerical flow $\vr\mapsto \F(\vr)$, defined by 
$$
\F_i^{j,j+1}(\vr) = \frac{f_i^j+f_i^{j+1}}{2} - \frac{\max\{|df_i^j|,|df_i^{j+1}|\}}{2}\left(\rho_i^{j+1}-\rho_i^j\right),
$$
with $f_i^j = \rho_i^j(1-\rho_i^j)$ and $df_i^j = 1-2\rho_i^j$, which is obviously continuous.

\paragraph{Step 2: the sequence $(\vr_n)_{n\in \N}$ is uniformly bounded in $W^{1,\infty}(0,T)$.} 
 For $\vu\in \U_{ad}$ given, let us introduce the function $\mathcal{L} : t\mapsto \frac{1}{2}\Vert \rho(t)\Vert_{\R^{N_c\times N_r}}^2$. Up to standard renormalizations, we will suppose in this proof that $v_i^{\max} = \r_i^{\max}=1$ and that $\Delta x_i$ does not depend on the index $i$. According to the chain rule, $\mathcal{L}$ is differentiable, and its derivative reads\footnote{we drop the time dependancy notation for readability.}
    \begin{eqnarray*}
        \mathcal{L}'(t) &=& \langle \vr'(t),\vr(t) \rangle_{\R^{N_c\times N_r}} = \langle g(\vr(t),\vu(t)), \vr(t) \rangle_{\R^{N_c\times N_r}} \\
         &= &-\frac{1}{\Delta x}\sum_{i=1}^{N_r}\sum_{j=1}^{N_c}\left(\F_i^{j,j+1}(\vr)- \F_i^{j-1,j}(\vr)\right)\r_i^j\\ 
         &=& -\frac{1}{\Delta x}\sum_{i=1}^{N_r}\left(\g_i^R\r_i^{N_c}+\sum_{j=1}^{N_c-1}\F_i^{j,j+1}(\vr)\r_i^j - \g_i^L\r_i^1 - \sum_{j=2}^{N_c}\F_i^{j-1,j}(\vr)\r_i^j \right)\\ 
         &=& \frac{1}{\Delta x}\sum_{i=1}^{N_r}\left(\g_i^L\r_i^1-\g_i^R\r_i^{N_c} + \sum_{j=1}^{N_c-1}\F_i^{j, j+1}(\vr)\left(\r_i^{j+1}-\r_i^{j}\right)\right). 
    \end{eqnarray*}
For a given $i\in \llbracket 1,N_r\rrbracket$, and $j\in\llbracket 1,N_c-1\rrbracket$, we have
    \begin{align*}
        \F_i^{j,j+1}(\vr)\left(\r_i^{j+1}-\r_i^j\right) &= \left( \frac{f_i^j+f_i^{j+1}}{2} - \frac{\max\{|df_i^j|,|df_i^{j+1}|\}}{2}\left(\rho_i^{j+1}-\rho_i^j\right) \right)\left(\r_i^{j+1}-\r_i^j\right)\\
        &\leq \left(f_i(\s_i) + (\r_i^{j+1}-\r_i^j) \right) \left(\r_i^{j+1}-\r_i^j\right),
    \end{align*}
    since $f_i^j\leq f_i(\s_i)= 1/4$  according to the explicit expression~\eqref{eq:fluxlwr} of $f_i$, and $|df_i^j|\leq v_i^{\max}=1$. Since the local Lax-Friedrich numerical scheme is monotone, we infer that $\rho_i^j$ is lower than $\rho_{\max} = 1$. We then have
    \begin{align*}
        \F_i^{j,j+1}(\vr)\left(\r_i^{j+1}-\r_i^j\right) &\leq 2 f_i(\s_i) + \left(\r_i^{j+1}-\r_i^j\right)^2 = \frac{1}{2} + \left(\r_i^{j+1}-\r_i^j\right)^2 \\
&\leq\frac{1}{2} + (\r_i^j)^2 + (\r_i^{j+1})^2 - 2\r_i^j\r_i^{j+1} \\ 
       & \leq \frac{1}{2} + (\r_i^j)^2 + (\r_i^{j+1})^2, 
    \end{align*}
    by positivity of the  $\r_i^j$. Using the majoration $ \g_i^L\r_i^1-\g_i^R\r_i^{N_c} \leq f_i(\s_i)\times 2 = 1/2$, all the calculations above yield 
    \begin{align*}
        \mathcal{L}'(t) & \leq   \frac{1}{\Delta x}\sum_{i=1}^{N_r} \left(\frac12 + \sum_{j=1}^{N_c-1} \left(\frac12+ (\r_i^j)^2 + (\r_i^{j+1})^2 \right) \right) \\ 
                        &\leq  \frac{1}{\Delta x} \left( \frac{N_r N_c}{2} + || \vr(t) || ^2 \right).
    \end{align*}
   We infer the existence of two positive numbers $\bar \alpha$, $\bar\beta$ that do not depend on $\vu$, such that $\mathcal{L}'(t) \leq \bar\alpha \mathcal{L}(t) + \bar\beta$ for a.e. $t\in(0,T)$.
 By using a Gr\"onwall-type inequality, we get
    \begin{equation}\label{eq:2325}
        \mathcal{L}(t) \leq \mathcal{L}(0)e^{\bar \alpha t} + \int_0^t \beta e^{\bar \alpha(t-s)}\, ds = \left(\mathcal{L}(0) + \frac{\bar \beta}{\bar \alpha}(1-e^{-\bar \alpha t})\right) e^{\bar \alpha t},
    \end{equation}

\paragraph{Step 3: conclusion.}
   According to \eqref{eq:2325}, the sequence $(\vr_n)_{n\in \N}$ is bounded in $L^\infty(0,T)$. Since 
$\vr_n' = g(\vr_n,\vu_n)$ a.e. in $(0,T)$ and $g$ is continuous, it follows that $(\vr_n)_{n\in \N}$ is uniformly bounded in $W^{1,\infty}(0,T)$.
 Therefore, by using the Arzela-Ascoli theorem, this sequence converges up to a subsequence in $C^0([0,T])$ toward an element $\vr_* \in W^{1,\infty}(0,T)$.

   Let us recast System~\eqref{eq:LWR-sd} as a fixed point equation, as 
    \begin{equation*}
\text{for every $t\in(0,T)$,} \quad \vr_n(t) - \vr_0 = \int_0^t g(\vr_n(s),\vu_n(s))\, \der s,
    \end{equation*}
we obtain by letting $n$ go to $+\infty$, 
    \begin{equation*}
        \vr_*(t) - \vr_0 = \int_0^t g(\vr_*(s),\vu_*(s))\, \der s,
    \end{equation*}
meaning that $\vr^*$ solves System~\eqref{eq:LWR-sd} with $\vu^*$ as control. Finally, according to the aforementioned convergences and since the functionals $S$ and $B$ are convex, it is standard that one has 
$$
\lim_{n\to +\infty}C_T(\vu_n)=C_T(\vu^*),\quad \liminf_{n\to +\infty}S(\vu_n)\geq S(\vu^*), \quad \liminf_{n\to +\infty}B(\vu_n)\geq B(\vu^*).
$$
We get that $\J(\vu^*)\leq \liminf_{n\to +\infty} \J(\vu_n)$, and we infer that $\vu^*$ solves Problem~\eqref{eq:control}. 
\end{proof}

\subsection{Optimality conditions}\label{sec:optimCond}

We have established the existence of an optimal control in \Cref{theo:existOCP}. In order to derive a numerical solution algorithm, we will now state the necessary optimality conditions on which the algorithm we will build is based. One of the difficulties is that the functional we use involves non-differentiable quantities. We will therefore first use the notion of subdifferential to write the optimality conditions. In Section~\ref{sec:num_method} dedicated to numerical experiments, we will explain how we approximate these quantities.

Let us first compute the differential of the functional $C_T$. To this aim, we introduce the tangent cone to the set $\U_{ad}$.

\begin{definition}\label{def:admPert}
Let $\vu\in  \U_{ad} $. A function $\vh$ in $L^{\infty}(0,T)$ is said to be an ``admissible perturbation'' of $\vu$ in $\U_{ad}$ if, for every sequence of positive real numbers $(\varepsilon_n)_{n\in\N}$ decreasing to 0, there exists a sequence of functions $\vh^n$ converging to $\vh$ for the weak-star topology of $L^{\infty}(0,T)$ as $n \rightarrow +\infty$, and such that $\vu+\varepsilon_n \vh^n \in \U_{ad}$ for every $n \in \mathbb{N}$.
\end{definition}

\begin{proposition}\label{prop:derCT}
Let $\vu\in\U_{ad}$ and $(\vr,\vg)$ the associated solution to \eqref{eq:LWR-sd}. We introduce the two matrices $M$ and $N$ defined from the Jacobian matrices of $f^{FV}$ and $\phi^{LP}$ as
	\begin{align}\label{eq:MNdef}
M(\vr,\vg,\vu) &= (\d_\vr f^{FV})(\vr,\vg) + (\d_\vg f^{FV})(\vr,\vg) \, (\d_\vr\phi^{LP})(\vr,\vu),\\
			N(\vr,\vg,\vu) &= (\d_\vg f^{FV})(\vr,\vg)(\d_\vu\phi^{LP})(\vr,\vu),
	\end{align}
where we use the notational conventions introduced in Section~\ref{sec:intro}.

The functional $C_T$ is differentiable in the sense of G\^ateaux and its differential reads
	\begin{equation}\label{eq:dC}
\der C_T(\vu)\vh = \int_0^T (N(\vr,\vg,\vu)^\top  \vp)\cdot \vh \, \der t,
	\end{equation}
for every admissible perturbation $\vh$, where $\vp$ is the so-called adjoint state, defined as the unique solution to the Cauchy system
	\begin{equation}\label{eq:dual}
		\begin{cases}
			\vp' + M(\vr,\vg,\vu)^\top  \vp = 0\quad \text{in }(0,T),\\
			\vp(T) = \vc.
		\end{cases}
	\end{equation}
\end{proposition}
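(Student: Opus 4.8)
The plan is to apply the adjoint-state method for ODE-constrained optimization. Writing the reduced dynamics $\vr' = g(\vr,\vu)$ with $g(\vr,\vu) = f^{FV}(\vr,\phi^{LP}(\vr,\vu))$ as in the proof of \Cref{theo:existOCP}, the chain rule gives $\d_\vr g = M$ and $\d_\vu g = N$, with $M,N$ as in \eqref{eq:MNdef} (understood evaluated along the reference trajectory $(\vr,\vg,\vu)$). Here I use that $\phi^{LP}$ is $C^1$, which forces me to rely on the $C^1$ regularization discussed in Section~\ref{sec:introCont} rather than on the bare hypothesis \eqref{hyp:phiLP}, since $M$ involves the derivative $\d_\vr\phi^{LP}$ in the \emph{first} variable. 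Because the densities produced by the monotone scheme stay in $[0,1]$ and the state trajectories remain in a fixed compact set by the a priori bound of Step~2 in the proof of \Cref{theo:existOCP}, the partial derivatives of $g$ are uniformly continuous on the relevant compact set; this uniform continuity is what will make the remainder estimates below uniform in time.

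First I would establish the differentiability of the control-to-state map along admissible perturbations. Fix an admissible perturbation $\vh$, a sequence $\e_n\downarrow 0$, and a sequence $\vh^n \rightharpoonup^* \vh$ in $L^\infty(0,T)$ with $\vu+\e_n\vh^n\in\U_{ad}$, and set $\vr_n := \vr(\cdot\,;\vu+\e_n\vh^n)$, $\vr := \vr(\cdot\,;\vu)$, $\bm z_n := (\vr_n-\vr)/\e_n$, noting $\bm z_n(0)=0$. A first Gr\"onwall argument using the Lipschitz character of $g$ and the uniform bound on $\|\vh^n\|_{L^\infty}$ (weak-$*$ convergent sequences are norm-bounded) gives $\|\vr_n-\vr\|_{L^\infty} = O(\e_n)$. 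Taylor-expanding $g$ to first order then yields $\bm z_n' = M\bm z_n + N\vh^n + \e_n^{-1}R_n$ with $\|R_n\|_{L^\infty} = o(\e_n)$, the uniformity coming from the uniform continuity noted above.

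I would then pass to the limit through the Duhamel representation $\bm z_n(t) = \int_0^t \Phi(t,s)\big(N(s)\vh^n(s) + \e_n^{-1}R_n(s)\big)\,\der s$, where $\Phi$ is the resolvent of $\bm z'=M\bm z$. The remainder integral vanishes since $\|\e_n^{-1}R_n\|_{L^\infty}\to 0$, while the principal term converges because $s\mapsto \mathbf{1}_{[0,t]}(s)\,\Phi(t,s)N(s)$ belongs to $L^1(0,T)$ and $\vh^n\rightharpoonup^*\vh$. Hence $\bm z_n(t)\to \bm z(t)$, where $\bm z$ is the unique solution of the linearized system $\bm z' = M\bm z + N\vh$, $\bm z(0)=0$. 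Evaluating at $t=T$ and using $C_T(\vu+\e_n\vh^n)-C_T(\vu) = \e_n\,\vc\cdot\bm z_n(T)$ shows that $C_T$ is G\^ateaux-differentiable along $\vh$ with $\der C_T(\vu)\vh = \vc\cdot\bm z(T)$; in particular the limit depends only on $\vh$, not on the approximating sequence. It then remains to eliminate $\bm z$ in favor of the adjoint state $\vp$ solving \eqref{eq:dual}: differentiating $t\mapsto\langle\vp,\bm z\rangle$, the terms $\langle M^\top\vp,\bm z\rangle$ and $\langle\vp,M\bm z\rangle$ cancel, leaving $\tfrac{\der}{\der t}\langle\vp,\bm z\rangle = \langle N^\top\vp,\vh\rangle$; integrating over $(0,T)$ with $\bm z(0)=0$ and $\vp(T)=\vc$ gives exactly \eqref{eq:dC}.

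The main obstacle is Step~2, namely controlling the nonlinear remainder $R_n$ \emph{uniformly in time} while the perturbation direction $\vh^n$ converges only weakly-$*$. The mechanism that makes this work is a separation of scales: the nonlinearity must be linearized in a strong sense --- which is precisely why one needs the $C^1$ regularization of $\phi^{LP}$ and the compactness of the trajectory range to obtain uniform-in-$t$ remainder bounds --- whereas the genuinely $\vh$-linear contribution can, and must, be passed to the limit only weakly, something the Duhamel representation against an $L^1$ kernel accommodates. Verifying rigorously that $\|\e_n^{-1}R_n\|_{L^\infty}\to 0$, and not merely pointwise, is the crux of the argument.
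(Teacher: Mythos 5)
Your proof is correct and follows essentially the same route as the paper: linearize the state equation to obtain $\dot{\vr}' = M\dot{\vr} + N\vh$ with $\dot{\vr}(0)=0$, introduce the adjoint system \eqref{eq:dual}, and integrate by parts so that the $M$-terms cancel and only $\int_0^T (N^\top\vp)\cdot\vh\,\der t$ survives. The only difference is that you supply the details of the G\^ateaux differentiability of the control-to-state map (Gr\"onwall bound, Taylor remainder, Duhamel representation with weak-$*$ passage in $\vh^n$), which the paper dispenses with as ``standard'' by reference to the Pontryagin Maximum Principle literature, and you correctly flag that the definition of $M$ requires differentiability of $\phi^{LP}$ in $\vr$, i.e.\ the $C^1$ regularization, beyond the bare hypothesis \eqref{hyp:phiLP}.
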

\begin{remark}
In \Cref{prop:derCT} above, the matrices $M$ and $N$ express respectively the way by which $\vr$ interacts with $\vg$ and $\vg$ interacts with $\vu$.
\end{remark}
\begin{proof}[Proof of~\Cref{prop:derCT}]
Let $\vu\in \U_{ad}$. The G\^ateaux differentiability of $C_T$, $\U_{ad}\ni \vu \mapsto \vr$ and $\U_{ad}\ni \vu \mapsto \vg$ is standard, and follows for instance directly of the proof of the Pontryagin Maximum Principle (PMP, see e.g. \cite{Lee-Markus}). Although the expression of the differential of $C_T$ could also be obtained by using the PMP, we provide a short proof hereafter to make this article self-contained.
 
Let $\vh\in\Linf$ be an admissible perturbation of $\vu$ in $\U_{ad}$. One has
\begin{equation}\label{eq:dC1}
    \der C_T(\vu)\,\vh = \vc\cdot\pt{\vr}(T),
\end{equation}
where $\pt{\vr} = \der(\vu\mapsto\vr)\vh$ solves the system  
\begin{equation*} 
	\begin{cases}
		\dot\vr' = (\d_\vr f^{FV})(\vr,\vg)\dot\vr + (\d_\vg f^{FV})(\vr,\vg)\dot\vg, \\
		\dot\vg = (\d_\vr\phi^{LP})(\vr,\vu)\dot\vr + (\d_\vu\phi^{LP})(\vr,\vu)\vh, \\
		\dot\vr(0) = 0.
	\end{cases}
\end{equation*}
We infer that $\dot \vr$ solves
\begin{equation}\label{eq:dotvr}
	\begin{cases}
		\dot\vr' = M(\vr,\vg,\vu) \dot\vr + N(\vr,\vg,\vu) \vh, \\
		\dot\vr(0) = 0,
	\end{cases} 
\end{equation}
with $M$ and $N$ as defined in \eqref{eq:MNdef}.
Let us multiply the main equation of \eqref{eq:dual} by $\dot \vr$ in the sense of the inner product, and integrate over $(0,T)$. We obtain:
$$
\int_0^T \dot\vr \cdot \frac{\der \vp}{\der t} \, \der t + \int_0^T \dot\vr\cdot(M(\vr,\vg,\vu)^\top  \vp) \, \der t=0.
$$
Similarly, let us multiply the main equation of \eqref{eq:dotvr} by $\vp$ in the sense of the inner product, and integrate over $(0,T)$. We obtain:
$$
\int_0^T \vp \cdot \frac{\der \dot\vr}{\der t} \, \der t - \int_0^T M(\vr,\vg,\vu)\dot\vr\cdot \vp \, \der t=\int_0^T N(\vr,\vg,\vu)\vh\cdot \vp\, \der t.
$$
Summing the two last equalities above yields
$$
\dot\vr(T)\cdot \vp(T)-\dot\vr(0)\cdot \vp(0)=\int_0^T \vh\cdot (N(\vr,\vg,\vu)^\top \vp)\, \der t.
$$
Using this identity with $\vp(T) = c$ and $\dot\vr(0) = 0$ results in Expression \eqref{eq:dC}.
\end{proof}

From this result, we will now state the optimality conditions for Problem~\eqref{eq:control}. Let us first recall that, according to \cite[Proposition~I.5.1]{MR1727362}, the subdifferential of the total variation is well-known, given by
$$
\partial \operatorname{TV}(\vu^*)=\{\boldsymbol{\eta} \in C^0([0,T];\R^{N_r})\mid \Vert \boldsymbol{\eta} \Vert_\infty\leq 1\text{ and }\int \boldsymbol{\eta} {\rm d}\vu^*=\operatorname{TV}(\vu^*)\}.
$$

Let us denote by $e_i$ the $i$-th vector of the canonical basis of $\R^{N_r}$
\begin{theorem}\label{theo:copt1332}
Let $\vu^*=(u_i^*)_{1\leq i\leq N_r}$,  denote a solution to Problem~\eqref{eq:control}, $(\vr,\vg)$ the associated solution to \eqref{eq:LWR-sd}. and let $i\in \llbracket 1,N_r\rrbracket$. There exists $\boldsymbol{\eta^*}\in \partial \operatorname{TV}(\vu^*)$ such that
\begin{itemize}
\item on $\{u^*_i=0\}$, one has $\Psi\cdot e_i \geq 0$,
\item on $\{u^*_i=1\}$, one has $\Psi\cdot e_i \leq 0$,
\item on $\{0<u^*_i<1\}$, one has $\Psi\cdot e_i = 0$,
\end{itemize}
where the function $\Psi:[0,T]\to \R^{N_r}$ is given by
$$
\Psi(t)=N(\vr(t),\vg(t),\vu^*(t))^\top \vp^*(t)+\theta_S\left(\sum_{i=1}^{N_r}u_i^*(t)-N_{\max}\right)_++\theta_B\boldsymbol{\eta^*}(t)
$$
and where $\vp^*$ denotes the adjoint state introduced in Proposition~\ref{prop:derCT}, associated to the control choice $\vu^*$.
\end{theorem}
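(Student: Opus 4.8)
The plan is to decompose $\J$ into its Gâteaux-differentiable part $G := C_T + \tfrac{\theta_S}{2}S$ and its convex, nonsmooth part $\theta_B B$, to derive from minimality a single variational inequality at $\vu^*$, and then to convert it into the announced pointwise alternatives by convex subdifferential calculus followed by a localization argument on the box $\U_{ad}$. First I would compute the gradient of $G$. Proposition~\ref{prop:derCT} already gives $\der C_T(\vu^*)\vh = \int_0^T (N^\top\vp^*)\cdot\vh\,\der t$. Since $x\mapsto (x-\Nm)_+^2$ is $C^1$ with derivative $2(x-\Nm)_+$, the term $S$ is Gâteaux-differentiable with $\tfrac{\theta_S}{2}\der S(\vu^*)\vh = \int_0^T \theta_S\big(\sum_i u_i^*-\Nm\big)_+(\vone\cdot\vh)\,\der t$. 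Hence $G$ is Gâteaux-differentiable and its Riesz representative is $\nabla G(\vu^*) = N^\top\vp^* + \theta_S\big(\sum_i u_i^*-\Nm\big)_+\vone$, whose $i$-th component is exactly the non-$\boldsymbol{\eta^*}$ part of $\Psi\cdot e_i$; thus $\Psi = \nabla G(\vu^*)+\theta_B\boldsymbol{\eta^*}$ for whatever $\boldsymbol{\eta^*}$ the argument produces.

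Next I would derive the variational inequality. Fix $\vv\in\U_{ad}$ and set $\vh=\vv-\vu^*$. As $\U_{ad}$ is convex, $\vu^*+\e\vh=(1-\e)\vu^*+\e\vv\in\U_{ad}$ for $\e\in[0,1]$, so $\vh$ is an admissible perturbation in the sense of Definition~\ref{def:admPert} (take $\vh^n\equiv\vh$). Minimality gives $\J(\vu^*+\e\vh)\geq\J(\vu^*)$. Expanding $G$ to first order and bounding $B(\vu^*+\e\vh)\leq(1-\e)B(\vu^*)+\e B(\vv)$ by convexity, then dividing by $\e$ and letting $\e\to0^+$, yields
\[
\langle\nabla G(\vu^*),\vv-\vu^*\rangle + \theta_B\big(B(\vv)-B(\vu^*)\big)\geq 0\qquad\text{for all }\vv\in\U_{ad}.
\]
Equivalently, $\vu^*$ minimizes over $\U_{ad}$ the genuinely convex functional $\vv\mapsto\langle\nabla G(\vu^*),\vv\rangle+\theta_B B(\vv)$, i.e. $0\in\partial\big(\langle\nabla G(\vu^*),\cdot\rangle+\theta_B B+\iota_{\U_{ad}}\big)(\vu^*)$, where $\iota_{\U_{ad}}$ is the indicator function of $\U_{ad}$.

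It remains to extract $\boldsymbol{\eta^*}$ and to localize. Because the linear term is continuous and $\U_{ad}$ has nonempty interior in $L^\infty$ containing a point (e.g. $\tfrac12\vone$) at which $B$ is finite, the subdifferential sum rule applies and produces $\boldsymbol{\eta^*}\in\partial\operatorname{TV}(\vu^*)$ — understood componentwise since $B=\sum_i\operatorname{TV}(u_i)$ — such that $-\Psi=-\nabla G(\vu^*)-\theta_B\boldsymbol{\eta^*}$ lies in the normal cone $\mathcal{N}_{\U_{ad}}(\vu^*)$; that is, $\int_0^T\Psi\cdot(\vv-\vu^*)\,\der t\geq0$ for every $\vv\in\U_{ad}$. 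Since the constraint in $\U_{ad}$ is a pointwise box $[0,1]^{N_r}$, a standard measurable-selection argument (were the pointwise inequality violated on a set of positive measure, choosing $\vv$ supported there would contradict the integral inequality) turns this into: for a.e.\ $t$, $\Psi(t)\cdot(w-\vu^*(t))\geq0$ for all $w\in[0,1]^{N_r}$. The box being a product, this decouples across components, and testing with $w=\vu^*(t)\pm e_i$ on the relevant level sets gives the three alternatives: $\Psi\cdot e_i\geq0$ on $\{u_i^*=0\}$, $\Psi\cdot e_i\leq0$ on $\{u_i^*=1\}$, and $\Psi\cdot e_i=0$ on $\{0<u_i^*<1\}$.

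The main obstacle is that $\J$ is neither convex (because $G$ depends on $\vu^*$ through the nonlinear state equation) nor smooth (because of the total-variation term $B$), so one cannot apply convex analysis to $\J$ directly. The device that overcomes this is to \emph{freeze} the gradient $\nabla G(\vu^*)$ in the variational inequality above, which reduces the first-order condition to that of a truly convex problem minimized by $\vu^*$; only then does the subdifferential calculus yield a single multiplier $\boldsymbol{\eta^*}$. The subsidiary delicate point is justifying the sum rule $\partial(\theta_B B+\iota_{\U_{ad}})=\theta_B\partial B+\mathcal{N}_{\U_{ad}}$, which needs a constraint qualification; this is exactly what the nonempty $L^\infty$-interior of $\U_{ad}$ provides.
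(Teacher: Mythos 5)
Your proposal is correct and arrives at the paper's key intermediate step --- the variational inequality $\int_0^T \Psi\cdot\vh\,\der t\ge 0$ for admissible perturbations $\vh$, i.e.\ \eqref{strasb:1750}, with a single multiplier $\boldsymbol{\eta^*}\in\partial\operatorname{TV}(\vu^*)$ --- followed by pointwise localization on the box constraint, which is the same overall architecture as the paper's proof. The genuine difference is in how that inequality is justified. The paper writes $0\in\partial\left(\mathcal{J}_\theta(\vu^*)+\iota_{\U_{ad}}(\vu^*)\right)$ and splits the subdifferential by ``standard computational rules,'' a step that is slightly delicate because $C_T$ and $S$ are not convex; your device of freezing $\nabla G(\vu^*)$ (computed from Proposition~\ref{prop:derCT} and the chain rule for $S$, both of which you carry out correctly) and observing that $\vu^*$ then minimizes the genuinely convex surrogate $\vv\mapsto\langle\nabla G(\vu^*),\vv\rangle+\theta_B B(\vv)+\iota_{\U_{ad}}(\vv)$ makes the Moreau--Rockafellar sum rule legitimately applicable, at the cost of checking a constraint qualification, which you do via the interior point $\tfrac12\vone$. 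Your localization (contradiction on sets of positive measure, then componentwise testing) is interchangeable with the paper's Lebesgue-point argument; the only cosmetic imprecision is that on $\{0<u_i^*<1\}$ the admissible test directions are $w=\vu^*(t)\pm\delta e_i$ for small $\delta>0$ (e.g.\ exhausting the set by $\{1/n<u_i^*<1-1/n\}$), not $\pm e_i$, since $\vu^*(t)\pm e_i$ leaves the box except at the endpoints. In short, both routes prove the same statement; yours buys a cleaner justification of the subdifferential splitting for a nonconvex-plus-convex sum, while the paper's is more compact.
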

\begin{remark}
Written in this way, the first order optimality conditions are difficult to use. In the next section, we will introduce an approximation of the total variation of $\vu^*$ leading to optimality conditions more easily usable within an algorithm.
\end{remark}

\begin{proof}[Proof of Theorem~\ref{theo:copt1332}]

To derive the first order optimality conditions for this problem, it is convenient to introduce the so-called indicator function $\iota_{\U_{ad}}$ given by
$$
\iota_{\U_{ad}}(\vu)=\left\{\begin{array}{ll}
0 & \text{if }\vu\in \U_{ad}\\
+\infty & \text{else}.
\end{array}\right.
$$
Observing that the optimization problem we want to deal with can be recast as
$$
\min_{\vu\in L^\infty((0,T);\R^{N_r})}\mathcal{J}_\theta(\vu)+\iota_{\U_{ad}}(\vu),
$$
it is standard in nonsmooth analysis to write the first order optimality conditions as: 
$$
0\in  \partial \left(\mathcal{J}_\theta(\vu^*)+ \iota_{\U_{ad}}(\vu^*)\right),
$$
or similarly, by using standard computational rules \cite{MR1727362},
$$
-\partial C_T(\vu^*)-\frac{\theta_S}{2}\partial S(\vu^*)\in  \theta_B\partial B(\vu^*)+\partial \iota_{\U_{ad}}(\vu^*),
$$
Let $\vu\in \U_{ad}$. The condition above yields the existence of $\boldsymbol{\eta^*}=(\eta_i^*)_{1\leq i\leq N_r}\in \partial \operatorname{TV}(\vu^*)$ such that
$$
\der C_T(\vu^*)(\vu-\vu^*) +\theta_S\der S(\vu^*)(\vu-\vu^*)+\theta_B\sum_{i=1}^{N_r}\langle \eta_i^*,u_i-u_i^*\rangle_{L^2(0,T)}\geq 0.
$$
Since $\vu$ is arbitrary, we infer that for any admissible perturbation  $\vh$ of $\vu^*$ (see Definition~\ref{def:admPert}), one has
$$
\der C_T(\vu^*)\vh +\theta_S\der S(\vu^*)\vh+\theta_B\sum_{i=1}^{N_r}\langle \eta_i^*,h_i\rangle_{L^2(0,T)}\geq 0,
$$
or similarly
\begin{equation}\label{strasb:1750}
\int_0^T \vh\cdot \left( N(\vr,\vg,\vu) ^\top \vp^*+\theta_S\left(\sum_{i=1}^{N_r}u_i^*-N_{\max}\right)_++\theta_B\boldsymbol{\eta^*}\right)\, \der t\geq 0.
\end{equation}
To analyze this optimality condition, let us introduce the function $\Psi:[0,T]\to \R^{N_r}$ defined by
\begin{equation}\label{def:Psi}
\Psi(t)=N(\vr,\vg,\vu) ^\top \vp^*(t)+\theta_S\left(\sum_{i=1}^{N_r}u_i^*-N_{\max}\right)_++\theta_B\boldsymbol{\eta^*}(t).
\end{equation}

 In what follows, we will write the optimality conditions holding for the $i$-th component of $\vu^*$, where $i\in \llbracket 1,N_r\rrbracket$ is given. 
 
 Let us assume that the set $\mathcal{I}=\{0<u_i^*<1\}$ is of positive Lebesgue measure. Let $x_0$ be a Lebesgue point of $u_i^*$ in $\mathcal{I}$ and let $(G_{n})_{n\in \N}$ be a sequence of measurable subsets with $G_{n}$ included in $\mathcal{I}$ and containing $x_0$. Let us consider $\vh=(h_j)_{1\leq j\leq N_r}$ such that $h_j=0$ for all $j\in \llbracket 1,N_r\rrbracket\backslash \{i\}$ and $h_i=\mathds{1}_{G_{n}}$. Notice that $\vu^* \pm\eta \vh$ belongs to $\mathcal{U}_{ad}$ whenever $\eta$ is small enough. According to \eqref{strasb:1750}, one has
\[
\pm \int_{G_n} \Psi(t)\cdot e_i\, \der t\geq 0.
\] 
Dividing this inequality by $|G_{n}|$ and letting $G_{n}$ shrink to $\{x_0\}$ as $n\to +\infty$ shows that one has
$$
\Psi(t)\cdot e_i=0, \quad \text{a.e. in }\mathcal{I}.
$$

Let us now assume that the set $\mathcal{I}_1=\{u_i^*=1\}$ is of positive Lebesgue measure. Then, by mimicking the reasoning above, we consider $x_1$, a Lebesgue point of $u_i^*$ in $\mathcal{I}_1$, and $\vh=(h_j)_{1\leq j\leq N_r}$ such that $h_j=0$ for all $j\in \llbracket 1,N_r\rrbracket\backslash \{i\}$ and $h_i=-\mathds{1}_{G_{n}}$, where $(G_{n})_{n\in \N}$ is a sequence of measurable subsets with $G_{n}$ included in $\mathcal{I}_1$ and containing $x_1$. According to \eqref{strasb:1750}, one has
\[
- \int_{G_n} \Psi(t)\cdot e_i\, \der t\geq 0.
\] 
As above, we divide this inequality by $|G_{n}|$ and let $G_{n}$ shrink to $\{x_1\}$ as $n\to +\infty$. We recover that $\Psi(t)\cdot e_i\leq 0$.

Regarding now the set $\mathcal{I}_0=\{u_i^*=0\}$, the reasoning is a direct adaptation of the case above, which concludes the proof.
\end{proof}

\section{Towards a numerical algorithm}
\label{sec:num_method}
In this section, we introduce an exploitable approximation of the problem we are dealing with and describe the algorithm implemented in the numerical part.
 
\subsection{An approximate version of Problem~\eqref{eq:control}}\label{sec:approxContversion}
The fact that Problem~\eqref{eq:control} involves the total variation of control makes the problem non-differentiable. Of course, dedicated algorithms exist to take into account such a term in the solution, for instance Chambolle’s projection algorithm \cite{chambolle2004algorithm}. Nevertheless, in order to avoid too costly numerical approaches, we have chosen to consider a simple differentiable approximation of the term $B(\vu)$, namely 
\begin{equation}\label{eq:bv}
    B_\nu(\vu) = \sum_{i=1}^{N_r}\operatorname{TV_\nu}(u_i),
\end{equation}
where $\nu>0$ is a small parameter and the total variation $\operatorname{TV}(u_i)$ is approximated by a differentiable functional in $L^2(0,T)$, denoted $\operatorname{TV}_\nu(u_i) $, where $\nu>0$ stands for a regularization parameter.
The concrete choice of the differentiable approximation of the TV standard will be discussed in the rest of this section. We will also give some elements on its practical implementation.


We are thus led to consider the following approximate version of Problem~\eqref{eq:control}:
\begin{equation}\label{eq:control_approx} \tag{$\mathcal{P}_{\theta,\nu}$}
\boxed{\inf_{\vu \in \U_{ad}}{\mathcal{J}_{\theta,\nu}(\vu)}}.
\end{equation}
where $\U_{ad}$ is given by \eqref{def:Uad}, and $\mathcal{J}_{\theta,\nu}$ is given by
\begin{equation}
\mathcal{J}_{\theta,\nu}(\vu) = C_T(\vu) + \frac{\theta_S}{2} S(\vu) + \theta_B B_\nu(\vu),
\end{equation}
We will see that this approximation is in fact well adapted to a practical use. Indeed, the first order optimality conditions for this approximated problem can be rewritten in a very concise and workable way, unlike the result stated in Theorem~\ref{theo:copt1332}. This is the purpose of the following result.  

\begin{theorem}\label{theo:2301}
Let $\vu^*\in \U_{ad}$ denote a local minimizer for Problem~\eqref{eq:control_approx} and $(\vr,\vg)$ the associated solution to \eqref{eq:LWR-sd}. Then, $\vu^*$ satisfies the first order necessary condition
$$
\L(\cdot)=0\quad \text{a.e. on }[0,T],
$$
where $\Lambda:[0,T]\to \R^{N_r}$ is defined by
    \[ 
        \L(t) = \min\left\{\vu^*(t),\max\left\{\vu^*(t)-1,\nabla_{\vu} \mathcal{J}_{\theta,\nu}(\vu^*)(t)\right\}\right\},
    \]
    and 
    $$
    \nabla_{\vu} \mathcal{J}_{\theta,\nu}(\vu^*):[0,T]\ni t \mapsto N(\vr(t),\vg(t),\vu(t)) ^\top \vp^*(t)+\theta_S\left(\sum_{i=1}^{N_r}u_i^*(t)-N_{\max}\right)_++\theta_B\sum_{i=1}^{N_r}\nabla_{\vu}\operatorname{TV}_\nu(u_i)(t),
    $$
where $\vp^*$ has been introduced in Theorem~\ref{theo:copt1332}, the min, max operations being understood componentwise, and the term $\nabla_{\vu}$ denoting the gradient with respect to $\vu$ in $L^2(0,T)$.
\end{theorem}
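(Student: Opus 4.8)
The plan is to carry out the same variational analysis as in the proof of Theorem~\ref{theo:copt1332}, but now exploiting that the regularization $B_\nu$ is \emph{differentiable}, and then to repackage the resulting three-regime pointwise conditions into the single min-max equation $\L=0$.

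\textbf{Step 1: a variational inequality.} First I would record that $\mathcal{J}_{\theta,\nu}$ is G\^ateaux differentiable on $\U_{ad}$: the term $C_T$ is handled by Proposition~\ref{prop:derCT}, the staffing term $\frac{\theta_S}{2}S$ is smooth with derivative $\theta_S\int_0^T(\sum_i u_i^*-\Nm)_+\,\vone\cdot\vh\,\der t$, and $B_\nu$ is differentiable by construction with $L^2$-gradient $\theta_B\sum_i\nabla_\vu\operatorname{TV}_\nu(u_i)$. Collecting the Riesz representatives yields exactly the stated gradient $\nabla_\vu\mathcal{J}_{\theta,\nu}(\vu^*)$. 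Since $\U_{ad}$ is convex, for any $\vu\in\U_{ad}$ the segment $\vu^*+\varepsilon(\vu-\vu^*)$ stays admissible for $\varepsilon\in[0,1]$ and lies in an arbitrarily small neighborhood of $\vu^*$ as $\varepsilon\to 0^+$; local minimality then gives the first-order variational inequality
\begin{equation*}
\int_0^T \nabla_\vu\mathcal{J}_{\theta,\nu}(\vu^*)\cdot(\vu-\vu^*)\,\der t\geq 0\qquad\text{for all }\vu\in\U_{ad}.
\end{equation*}

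\textbf{Step 2: localization to pointwise conditions.} Next I would reuse essentially verbatim the Lebesgue-point argument from the proof of Theorem~\ref{theo:copt1332}, now with the honest gradient $\nabla_\vu\mathcal{J}_{\theta,\nu}(\vu^*)$ in place of $\Psi$ (differentiability of $B_\nu$ removing the need for the subdifferential selection $\boldsymbol{\eta^*}$). Testing with admissible perturbations $\vh=\pm\mathds{1}_{G_n}e_i$ supported on the relevant level sets and shrinking $G_n$ to a Lebesgue point produces, for each $i\in\llbracket 1,N_r\rrbracket$, the three regimes
\begin{equation*}
\nabla_\vu\mathcal{J}_{\theta,\nu}(\vu^*)\cdot e_i\ \geq 0\ \text{ on }\{u_i^*=0\},\quad \leq 0\ \text{ on }\{u_i^*=1\},\quad =0\ \text{ on }\{0<u_i^*<1\}.
\end{equation*}

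\textbf{Step 3: the min-max repackaging.} The crux is then the elementary pointwise identity: for $u\in[0,1]$ and $g\in\R$,
\begin{equation*}
\min\{u,\max\{u-1,g\}\}=0 \iff \big(u=0\Rightarrow g\geq 0\big),\ \big(u=1\Rightarrow g\leq 0\big),\ \big(0<u<1\Rightarrow g=0\big).
\end{equation*}
Applying this componentwise with $u=u_i^*(t)$ and $g=\nabla_\vu\mathcal{J}_{\theta,\nu}(\vu^*)(t)\cdot e_i$ converts the three regimes of Step~2 into $\L(t)=0$ a.e. on $[0,T]$. I expect the main obstacle to be purely a matter of bookkeeping: carefully verifying this equivalence in all three regimes (using $u-1<0<u$ in the interior case to see that both the inner $\max$ and the outer $\min$ are active precisely when $g=0$, and checking that the boundary cases $u\in\{0,1\}$ reproduce the correct inequality signs). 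Everything else follows from results already established, so no genuinely new estimate is required.
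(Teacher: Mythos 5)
Your proposal is correct and follows essentially the same route as the paper's own proof: the Euler variational inequality, the Lebesgue-point localization borrowed from Theorem~\ref{theo:copt1332}, and the componentwise verification (in both directions) that the three sign regimes are equivalent to $\L=0$. No gaps; the elementary min-max identity you state is exactly the check the paper performs case by case.
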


\begin{proof}
 The proof is similar to the proof of Theorem~\ref{theo:copt1332}. Indeed, let $\vu^*$ be a local minimizer for Problem~\eqref{eq:control_approx}. The first order optimality conditions are given by the so-called Euler inequation and read
 $
 \der \mathcal{J}_{\theta,\nu}(\vu^*)\vh\geq 0,
 $
 or similarly
 $$
 \int_0^T \nabla_\vu \mathcal{J}_{\theta,\nu}(\vu^*)\cdot \vh\, \der t\geq 0.
 $$
 for every admissible perturbation $\vh$, as defined in Definition~\ref{def:admPert}. Let us fix $i\in \llbracket 1,N_r\rrbracket$. By mimicking the reasoning involving Lebesgue points in the proof of Theorem~\ref{theo:copt1332}, we get
 \begin{itemize}
\item on $\{u^*_i=0\}$, one has $\nabla_{\vu}\mathcal{J}_{\theta,\nu}(\vu^*)\cdot e_i \geq 0$,
\item on $\{u^*_i=1\}$, one has $\nabla_{\vu}\mathcal{J}_{\theta,\nu}(\vu^*)\cdot e_i \leq 0$,
\item on $\{0<u^*_i<1\}$, one has $\nabla_{\vu}\mathcal{J}_{\theta,\nu}(\vu^*)\cdot e_i = 0$.
\end{itemize}
Note that, on $\{u^*_i=0\}$, one has $\nabla_\vu \mathcal{J}_{\theta,\nu}(\vu^*)\cdot e_i\geq 0$ and then 
        $\L(t) \cdot e_i=  \min\{0, \max\{-1, \nabla_\vu \mathcal{J}_{\theta,\nu}(\vu^*)(t)\cdot e_i\}\} = 0$. On $\{u^*_i=1\}$,  one has $\nabla_\vu \mathcal{J}_{\theta,\nu}(\vu^*)\cdot e_i\leq 0$ and therefore 
        $\L(t) \cdot e_i=  \min\{1, \max\{0, \nabla_\vu \mathcal{J}_{\theta,\nu}(\vu^*)(t)\cdot e_i\}\} = 0$. On $\{0<u^*_i<1\}$,  one has $\nabla_\vu \mathcal{J}_{\theta,\nu}(\vu^*)\cdot e_i= 0$ and therefore 
        $\L(t) \cdot e_i=  \min\{u_i^*, \max\{u_i^*(t)-1, 0\}\} = 0$.

Conversely, let us assume that $\Lambda(\cdot)=0$. On $\{u^*_i=0\}$, one has 
$$
0=\L(t) \cdot e_i=\min\{0, \max\{-1,\nabla_\vu \mathcal{J}_{\theta,\nu}(\vu^*)(t)\cdot e_i\}\},
$$ 
so that $\max\{-1,\nabla_\vu \mathcal{J}_{\theta,\nu}(\vu^*)(t)\cdot e_i\}\geq 0$ and finally, $\nabla_\vu \mathcal{J}_{\theta,\nu}(\vu^*)(t)\cdot e_i \geq 0$. A similar reasoning yields the optimality conditions on $\{u^*_i=1\}$ and $\{0<u^*_i<1\}$. The conclusion follows.
\end{proof}

\paragraph{Practical computation of the TV operator gradient in a discrete framework.}

From a practical point of view, we discretize the space of controls, which leads us to consider a time discretization denoted $(t_n)_{1\leq n\leq N_T}$ with $N_T\in \N^*$ fixed, as well as piecewise constant controls in time, denoted  $(u_i^n)_{\substack{1\leq i\leq N_r\\ 1\leq n\leq N_T}}$.

Hence, the term $u_i^n$ corresponds to the control at road $i$ and time $n$. The simplest discrete version of the TV semi-norm is given by 
\begin{equation*}
    \operatorname{TV}(\vu) \simeq  \sum_{i=1}^{N_r}\sum_{n=2}^{N_T}|u_i^n - u_i^{n-1}|.
\end{equation*}
In order to manipulate differentiable expressions, we introduce for some $\nu>0$, the smoothed discrete TV operator is given by 
\begin{equation}\label{eq:TVreg}
    \operatorname{TV}_\nu: \R^{N_r\times N_T}\ni (u_i^n)_{\substack{1\leq i\leq N_r\\ 1\leq n\leq N_T}} \mapsto \sum_{i=1}^{N_r}\sum_{n=2}^{N_T}a_\nu(u_i^n - u_i^{n-1}),
\end{equation}
where $ a_\nu:\R\ni x \mapsto \sqrt{x^2 + \nu^2}$.

Let $(i_0,n_0)\in \llbracket 1,N_r\rrbracket\times \llbracket 1,N_T\rrbracket$. In what follows, we will use a discrete equivalent of Theorem~\ref{theo:2301}, involving the gradient of $ \operatorname{TV}_\nu$, obtained from the expression
$$
\partial_{u_{i_0}^{n_0}}  \operatorname{TV}_\nu(\vu)=\left\{
\begin{array}{ll}
a_\nu'(u_{i_0}^{n_0}-u_{i_0}^{n_0-1})-a_\nu'(u_{i_0}^{n_0+1}-u_{i_0}^{n_0}) & \text{if }2\leq n_0\leq N_T-1\\
a_\nu'(u_{i_0}^{N_T}-u_{i_0}^{N_T-1}) & \text{if }n_0=N_T\\
-a_\nu'(u_{i_0}^{2}-u_{i_0}^{1}) & \text{if }n_0=1,
\end{array}
\right.
$$ 
where $\vu=(u_i^n)_{\substack{1\leq i\leq N_r\\ 1\leq n\leq N_T}}$ is given

%

\subsection{Numerical solving of the primal and dual problems}

The models we use are already discretized in space. We now explain how we discretize them in time.
We recall that, at the end of section~\ref{sec:approxContversion}, we have already considered that the controls are assimilated to piecewise constant functions on each cell of the considered mesh and on each time step.  

\paragraph{Finite volumes scheme for the primal problem.}

The main transport equation on network is solved by integrating for each road the finite volume flow given by \eqref{eq:LWR-sd} with an explicit Euler scheme: 
\begin{equation*}
    \r_{i,j}^{n+1} = \r_{i,j}^{n} - \frac{\D t_n}{\D x_{i,j}}\left(\F_{i,j+\frac{1}{2}}^{n} - \F_{i,j-\frac{1}{2}}^{n}\right), \quad 1\le i\le N_r,\quad 1\leq j\leq N_c,
\end{equation*}
using the local-Lax numerical flux 
\begin{equation*}
	\F_{i,j+\frac{1}{2}}^{n} = \F(\r_{i,j}^{n}, \r_{i,j+1}^{n}),\quad 2\leq j\leq N_c-1,
\end{equation*}
and the Neumann boundary conditions
$$
    \F_{i,\frac{1}{2}}^n   := \g_{i,L}^n, \quad \F_{i,N_c+\frac{1}{2}}^n := \g_{i,R}^n, \quad 1\le i\le N_r,
$$
where $ \g_{i,L}^n$ and $ \g_{i,R}^n$ are obtained as  the solution to the linear programming system
\begin{equation*}
	\vg^n = \phi^{\text{LP}}(\vr^{n}, \vu^{n}).
\end{equation*}

\paragraph{Euler scheme for adjoint problem.}
To solve the backward ODE \eqref{eq:dual},it is convenient to introduce $Z(t) := (M(T-t))^T$, and $\vq(t) := \vp(T-t)$ such that we are now dealing with the  Cauchy system:
\begin{equation*}
    \begin{cases}
        \vq'(t) 
        = Z(t)\,\vq(t), & t\in(0,T), \\ 
        \vq(0) = \vc, & \,
    \end{cases}
\end{equation*} 
that we integrate with a classical explicit Euler scheme:
$$
    \vq^1 = \vc,\quad \vq^{n+1} = \left(I+\D t_n\,Z^n\right) \vq^n, \quad n=1,\cdots,N_T-1.
$$
The solution is finally recovered using that $\vp^n = \vq^{N_T-n+1}$.

\subsection{Optimization algorithms}

The starting point of the algorithm we implement is based on a standard primal-dual approach, in which the state and the adjoint are computed in order to deduce the gradient of the considered functional. We combine it with a projection method in order to guarantee the respect of the $L^\infty$ constraints on the control. This method has the advantage of being robust, as it generally allows a significant decrease of the cost functional. On the other hand, it often has the disadvantage of being very local, which results in an important dependence on the initialization. Moreover, one can expect that there are many local minima, since the targeted problem is intrinsically of infinite dimension, which makes the search difficult.

We will propose a modification of this well-known method, using a fixed point method inspired by the optimality condition stated in Theorem~\ref{theo:2301}.

\subsubsection{Projected gradient descent}

A direct approach is to consider the gradient algorithm, in which we deal with the condition $0\leq u^k_i\leq 1$ by projection, according to Algorithm~\ref{alg:GD}.
The specific difficulty of this approach is to find a suitable descent-step $\delta_k$, which must be small enough to ensure descent but large enough for the algorithm to converge in a reasonable number of iterations. We hereby combine this algorithm with a scheduler \eqref{eq:scheduler} inspired from classical learning rate scheduler in deep-learning \cite{wu_demystifying_2019} to select an acceptable descent step, where $\delta_0$ and $\operatorname{decay}$ are given real numbers.

\begin{equation}\label{eq:scheduler}
    \delta_k := \frac{\delta_0}{1 + \operatorname{decay}\times k}.
\end{equation}

\begin{algorithm}
	\caption{Optimal control by projected gradient descent method (GD)}
	\begin{algorithmic}\label{alg:GD}
		\REQUIRE $\vr^0, \vu^0$, $\operatorname{tol}>0,\operatorname{itermax} > 0$
		\STATE \textbf{Initialization: } $k=0$
		\WHILE{$||\vu^{k+1}-\vu^k|| > \operatorname{tol}$ \AND $k \leq\operatorname{itermax}$}
			\STATE $(\vr, \vg)\leftarrow$ solution to $\begin{cases} 
				\vr'= f^{\text{FV}}(\vr,\vg) \\ 
				\vg = \phi^{\text{LP}}(\vr,\vu^{k}) \\ 
				\vr(0) = \vr^0
			\end{cases}$
			\STATE $\vp \leftarrow$ solution to $\begin{cases}
				\vp'+M(\vr,\vg,\vu^k)^T\vp=\bf{0} \\ 
				\vp(T) = \vc
			\end{cases}$
			\STATE $\vu^{k+1}\leftarrow \operatorname{proj}_{[0,1]}(\vu^k - \delta_k\nabla \mathcal{J}_{\theta,\nu}(\vu^k))$, with $\delta_k>0$ such that $\mathcal{J}_{\theta,\nu}(\vu^{k+1})\leq \mathcal{J}_{\theta,\nu}(\vu^k)$
			\STATE $k \leftarrow k+1$
    \ENDWHILE
  \end{algorithmic}
\end{algorithm}



\paragraph{Fixed point method}

A fixed-point (FP) algorithm is derived using the first-order optimality conditions stated in Theorems~\ref{theo:copt1332} and \ref{theo:2301}, by rewriting them in a fixed-point formulation. We have seen that they write under the form 
\[
	u_i \in I_\mu \Rightarrow \d_{u_i}\mathcal{J}_{\theta,\nu}(\vu)\in E_\mu,
\]
where $\mu \in  \{0, *, 1\}$, $I_0 = \{0\}$, $I_*=(0,1)$, $I_1=\{1\}$, $E_0 = \R_+$,  $E_*=\{0\}$, $E_1=\R_-$. 
This rewrites as
\[
	u_i \in I_\mu \Rightarrow u_i \in F_\mu:=\{u_i|\d_{u_i}\mathcal{J}_{\theta,\nu}(\vu)\in E_\mu\}.
\]
This leads us to compute $u_i$ by using the following fixed-point relationship
\begin{equation}
	u_i^{k+1} = 1 \times \mathds{1}_{\{\d_{u_i}\mathcal{J}_{\theta,\nu}(\vu^k) < -\kappa\}} + u_i^k\,\mathds{1}_{\{ -\kappa \leq \d_{u_i}\mathcal{J}_{\theta,\nu}(\vu^k) \leq \kappa\}} + 0 \times \mathds{1}_{\{\d_{u_i}\mathcal{J}_{\theta,\nu}(\vu^k) > \kappa\}},
\end{equation}
where $\kappa>0$ is a small threshold preventing too small gradients from modifying the control.
This finally leads us to Algorithm~\ref{alg:FP}. 
\begin{remark}\label{rk:fixed}
    The advantage of the fixed-point formulation can be illustrated by the fact that some common control configurations can lead to a flat gradient while optimisation is still possible, i.e. gradient descent is too local to detect certain appropriate (and existing) descent direction. Let's construct such a case where the fixed-point algorithm has a major advantage over gradient descent. We recall the constraints of the linear programming for the $2 \times 2$ case: 

\begin{eqnarray}
    0 & \leq & \g_1^R\leq\g_1^{R,\max}, \label{eq:constraint1} \\ 
    0 & \leq & \g_2^R\leq\g_2^{R,\max}, \label{eq:constraint2} \\
    0 & \leq & \au\g_1^R + \bu\g_2^R \leq (1-u_3)\g_3^{L,\max}, \label{eq:constraint3} \\
    0 & \leq & (1-\au)\g_1^R + (1-\bu)\g_2^R \leq (1-u_4)\g_4^{L,\max}. \label{eq:constraint4}
\end{eqnarray}
Considering for instance a case where $u_3=1$, then \eqref{eq:constraint3} yields $\au\g_1^R + \bu\g_2^R = 0$. By positivity, this implies that $\au\g_1^R = \bu\g_2^R=0$. If furthermore $u_4 > 0$, then it follows that $\au=P_\e^{\bar{\a}}(1-u_4)\neq 0$ and $\bu=P_\e^{\bar{\b}}(1-u_4)\neq 0$ (see~\eqref{eq:poly} in Appendix for the definition of these functions). It remains that necessarly, $\g_1^R=\g_2^R=0$ and the junction is blocked.
The point here is that the only way to influence the network -- therefore acting on the value of $\mathcal{J}_{\theta,\nu}$ -- is by having a control such that $\g_1^R$ or $\g_2^R$ becomes positive, and this can be obtained only when $u_4$ is set to 0. Indeed, we would now have $\au=P_\e^{\bar{\a}}(1)=P_\e^{\bar{\b}}(1)=\bu=0$, releasing the constraint given by \eqref{eq:constraint3} on the values of $\g_1^R, \g_2^R$.

However, if the algorithm finds a perturbation $\delta u_4>0$ small enough such that $u_4-\delta u_4\neq 0$, it would yields $\mathcal{J}_{\theta,\nu}(\vu) = \mathcal{J}_{\theta,\nu}(\vu - \delta u_4 e_4)$, thus $\d_{u_4}\mathcal{J}_{\theta,\nu}(\vu)=0$. As a result, the gradient descent step is stationnary for $u_4$ even though the perturbation $\delta u_4$ was in the good direction in order to decrease $\mathcal{J}_{\theta,\nu}$ after several more iterations. 

With the fixed-point algorithm instead, we find the suitable control $u_4$ in one iteration from the previous configuration thanks to the indicator functions, since the sign of the gradient contains all the information that we needed. This allows us to circumvent the threshold phenomenon of the gradient described above.
\end{remark}

\begin{algorithm}[H]
    \caption{Optimal control with Fixed Point method (FP)}
    \begin{algorithmic}\label{alg:FP}
        \REQUIRE $\vr^0,\vu^0$, $\operatorname{tol}>0,\operatorname{itermax}> 0$
        \STATE \textbf{Initialization: } $k=0$
        \WHILE{$||\vu^{k+1}-\vu^k|| > \operatorname{tol}$ \AND $k \leq\operatorname{itermax}$}
            \STATE $(\vr, \vg)\leftarrow$ solution to $\begin{cases} 
                \vr'= f^{\text{FV}}(\vr,\vg) \\ 
                \vg = \phi^{\text{LP}}(\vr,\vu^{k}) \\ 
                \vr(0) = \vr^0
            \end{cases}$
            \STATE $\vp \leftarrow$ solution to $\begin{cases}
                \vp'+M(\vr,\vg,\vu^k)^T\vp=\bf{0} \\ 
                \vp(T) = \vc
            \end{cases}$
            \STATE $\vu^{k+1}\leftarrow \mathds{1}_{\{\nabla \mathcal{J}_{\theta,\nu}(\vu^k) < -\kappa\}} + \vu^k\mathds{1}_{\{ -\kappa \leq \nabla {\mathcal{J}_{\theta,\nu}(\vu^k)} \leq \kappa\}} + 0 \times \mathds{1}_{\{\d_{u_i}\mathcal{J}_{\theta,\nu}(\vu^k) > \kappa\}}$
            \STATE $k \leftarrow k+1$
    \ENDWHILE
    \end{algorithmic}
\end{algorithm}

It is noteworthy that the convergence of a fixed-point algorithm applied to an extremal eigenvalue problem has been established in \cite{chambolle2023stability}, for certain parameters regimes involved in the problem. To the best of our knowledge, there is no general convergence result for this type of optimal control problem. 

\subsubsection{Hybrid GDFP methods}
Since gradient descent theoretically always guarantees a direction of descent (provided we choose a small enough step) but can be slow and/or remain trapped in a local extremum (see Remark~\ref{rk:fixed}), and since the fixed point appears more exploratory but does not guarantee descent, it seems worthwhile to investigate the hybridization of both methods. 

The chosen algorithm is implemented by computing most of the iterations by gradient descent and using the fixed point method every $K\in\N$ iterations in the expectation of escaping from possible basins of attraction of local minimizers. Note that $K$ is a hyper-parameter of the method. This is summarized in Algorithm~\ref{alg:GDFP}. 

Instead of performing FP steps regularly, we can choose to space them in order to have more more iterations for  the gradient descent to converge. This second version is given in  Algorithm~\ref{alg:FP_trigger} where the time between two FP steps increases by a factor of $\tau$.

\begin{algorithm}
	\caption{Optimal control with hybrid GD \& FP algorithm (GDFP)}
	\begin{algorithmic}\label{alg:GDFP}
		\REQUIRE $\vr^0,\vu^0$, $\operatorname{tol}>0,\operatorname{itermax} > 0$
		\STATE \textbf{Initialization: } $k=0$
		\WHILE {$||\vu^{k+1}-\vu^k|| > \operatorname{tol}$ \AND $k \leq\operatorname{itermax}$}
			\STATE $(\vr, \vg)\leftarrow$ solution to $\begin{cases} 
					\vr'= f^{\text{FV}}(\vr,\vg) \\ 
					\vg = \phi^{\text{LP}}(\vr,\vu^{k}) \\ 
					\vr(0) = \vr^0
				\end{cases}$
			\STATE $\vp \leftarrow$ solution to $\begin{cases}
				\vp'+M(\vr,\vg,\vu^k)^T\vp=\bf{0} \\ 
				\vp(T) = \vc
			\end{cases}$
			\IF {$k\% K == 0$} 
				\STATE $\vu^{k+1}\leftarrow \mathds{1}_{\{\nabla \mathcal{J}_{\theta,\nu}(\vu^k) < -\kappa\}} + \vu^k\mathds{1}_{\{ -\kappa \leq \nabla {\mathcal{J}_{\theta,\nu}(\vu^k)} \leq \kappa\}} + 0 \times \mathds{1}_{\{\d_{u_i}\mathcal{J}_{\theta,\nu}(\vu^k) > \kappa\}}$
			\ELSE
			\STATE $\vu^{k+1}\leftarrow \operatorname{proj}_{[0,1]}(\vu^k - \delta_k\nabla \mathcal{J}_{\theta,\nu}(\vu^k))$, with $\delta_k>0$ such that $\mathcal{J}_{\theta,\nu}(\vu^{k+1})\leq \mathcal{J}_{\theta,\nu}(\vu^k)$
			\ENDIF
			\STATE $k \leftarrow k+1$
	\ENDWHILE
  \end{algorithmic}
\end{algorithm}

\begin{algorithm}
	\caption{Optimal control with GDFP with spaced FP steps}
	\begin{algorithmic}\label{alg:FP_trigger}
		\REQUIRE $\vr^0,\vu^0$, $tol>0,\text{itermax} > 0$, $K_0$, $\tau$
		\STATE \textbf{Initialization: } $k=0$, $K=K_0$
		\WHILE {$||\vu^{k+1}-\vu^k|| > tol$ \AND $k \leq \text{itermax}$}
			\STATE $(\vr, \vg)\leftarrow$ solution of $\begin{cases} 
					\vr'= f^{\text{FV}}(\vr,\vg) \\ 
					\vg = \phi^{\text{LP}}(\vr,\vu^{k}) \\ 
					\vr(0) = \vr^0
				\end{cases}$
			\STATE $\vp \leftarrow$ solution of $\begin{cases}
				\vp'(t)+M^T\vp=\bf{0} \\ 
				\vp(T) = \vc
			\end{cases}$
			\IF {$k\% K == 0$} 
				\STATE $\vu^{k+1}\leftarrow \mathds{1}_{\{\nabla \J(\vu^k) < -\kappa\}} + \vu^k\mathds{1}_{\{ -\kappa \leq \nabla {\J(\vu^k)} \leq \kappa\}} + 0 \times \mathds{1}_{\{\d_{u_i}\mathcal{J}_{\theta,\nu}(\vu^k) > \kappa\}}$
                \STATE $K \leftarrow \tau K$ 
			\ELSE
			\STATE $\vu^{k+1}\leftarrow \operatorname{proj}_{[0,1]}(\vu^k - \delta_k\nabla \J(\vu^k))$, with $\delta_k>0$ such that $\J(\vu^{k+1})\leq \J(\vu^k)$
			\ENDIF
			\STATE $k \leftarrow k+1$
	\ENDWHILE
  \end{algorithmic}
\end{algorithm}
\section{Numerical Results}
\label{sec:num_results}


This section presents some results obtained in various situations using the methods presented above. In all that follows, we assume that $\rho^{\max} = 1$,  $v^{\max} = 1$ and $L = 1$. We will first validate our approach on single junctions before studying more complex road networks. These are namely: a traffic circle and a three lanes network of intermediate size with a configuration unfavorable to our objective.



\subsection{Single junctions}

We start by  considering single junctions of type $1\! \times\! 1$, $1\! \times\! 2$, $2\! \times\! 1$ and $2\! \times\! 2$ as they will be the building blocks of the more complex networks.

We consider $50$ mesh cells per road and the initial density equals $0.66$. The route to empty is always composed of an incoming and an outgoing road. The time interval of the simulations is adjusted in order to allow the route to be completely emptied: the final time $T$ thus equals respectively $6$, $3.5$, $10$ and $5$ for the four junctions. We start the optimization algorithms with initial controls equal to $0$ and set the convergence threshold to having $\norm{\Lambda}$ less than $10^{-1}$, with a prescribed maximum of $100$ iterations. Neither constraints on the number of controls ($\theta_S=0$) nor BV regularization ($\theta_B=0$) are considered for these test cases.

Fig.~\ref{fig:single_junctions} shows us the comparison between the cost functional history when using the gradient descent (GD), the fixed-point (FP) and the hybrid (GDFP) methods. Specific numerical parameters of the algorithm are given in \eqref{tab:junctions}. We observe fundamental differences in behaviour between the cost functionals obtained by GD and the one obtained by FP. Indeed, GD allows a regular descent whereas FP generates jumps and oscillations, which allows a better exploration of the parameters and avoids certain unsatisfactory local minima. 
On Fig.~\ref{fig:single_junctions_lambda}, we also plotted the evolution of the optimality conditions $\Lambda$ for all methods: as expected, we observe that this quantity reaches values close to $0$ at the optimal point.

The obtained controls are shown in Fig.~\ref{fig:single_junctions_ctrl} and seem relevant. For instance, in the  $2\! \times\! 1$ case, the control of the outgoing road $3$ is fully activated only after time $t\approx 8$ to enable the emptying of the incoming road 1 first. We further note that the controls are essentially sparse and non-oscillating. They are almost bang-bang, i.e. take only values $0$ and $1$.

These results seem to suggest that the hybrid method is the most likely to generalize to larger graphs because of its ability to explore and find critical points while still being able to provide convergence. It is therefore the one we will use in the following.

\begin{table}[H]
    \centering
        \begin{tabular}{c|l|c|c|c|c} 
            \toprule 
            Symbol & Name & 1x1 & 1x2 & 2x1 & 2x2 \\ 
            \midrule
            K & FP trigger in GDFP & 3 & 5 & 10  & 2\\
            $\kappa$ & vanishing gradient threshold in FP & 0 & $10^{-10}$ & $0$ & $10^{-1}$ \\
            $\delta_0$ & initial descent step & 1 & $5\times 10^{-2}$ & $2\times 10^{-1}$ & $10^{-1}$ \\
            $\operatorname{decay}$ & decay in scheduler & $10^{-2}$ & $10^{-1}$ & $10^{-1}$ & $10^{-1}$ \\
            \bottomrule
         \end{tabular}
    \caption{Numerical parameters for single junctions.}
    \label{tab:junctions}
\end{table}




\begin{figure}[H]
    \centering
    \begin{subfigure}[t!]{0.4\textwidth}
        \includegraphics[width=\textwidth]{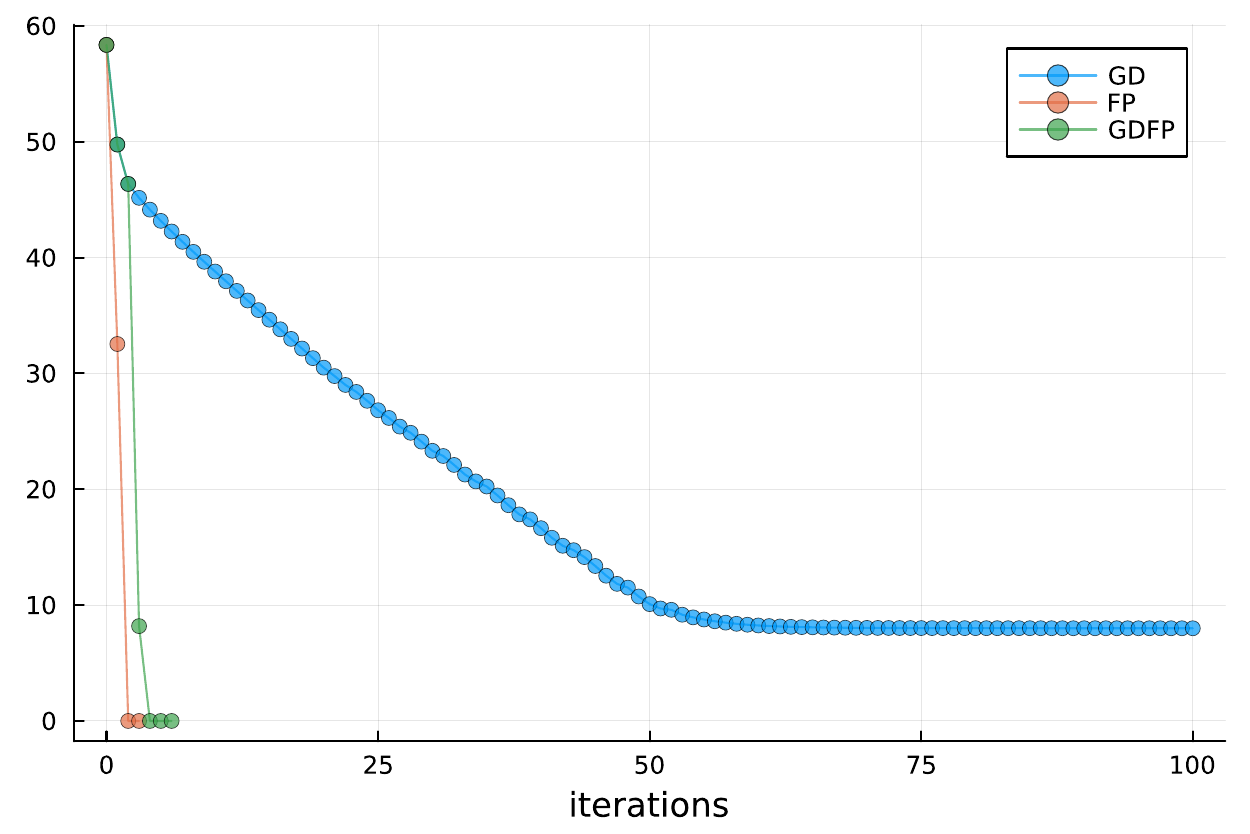}
        \caption{1x1}
        \label{fig:losses_1x1}
    \end{subfigure}
    \hfill
    \begin{subfigure}[t!]{0.4\textwidth}
        \includegraphics[width=\textwidth]{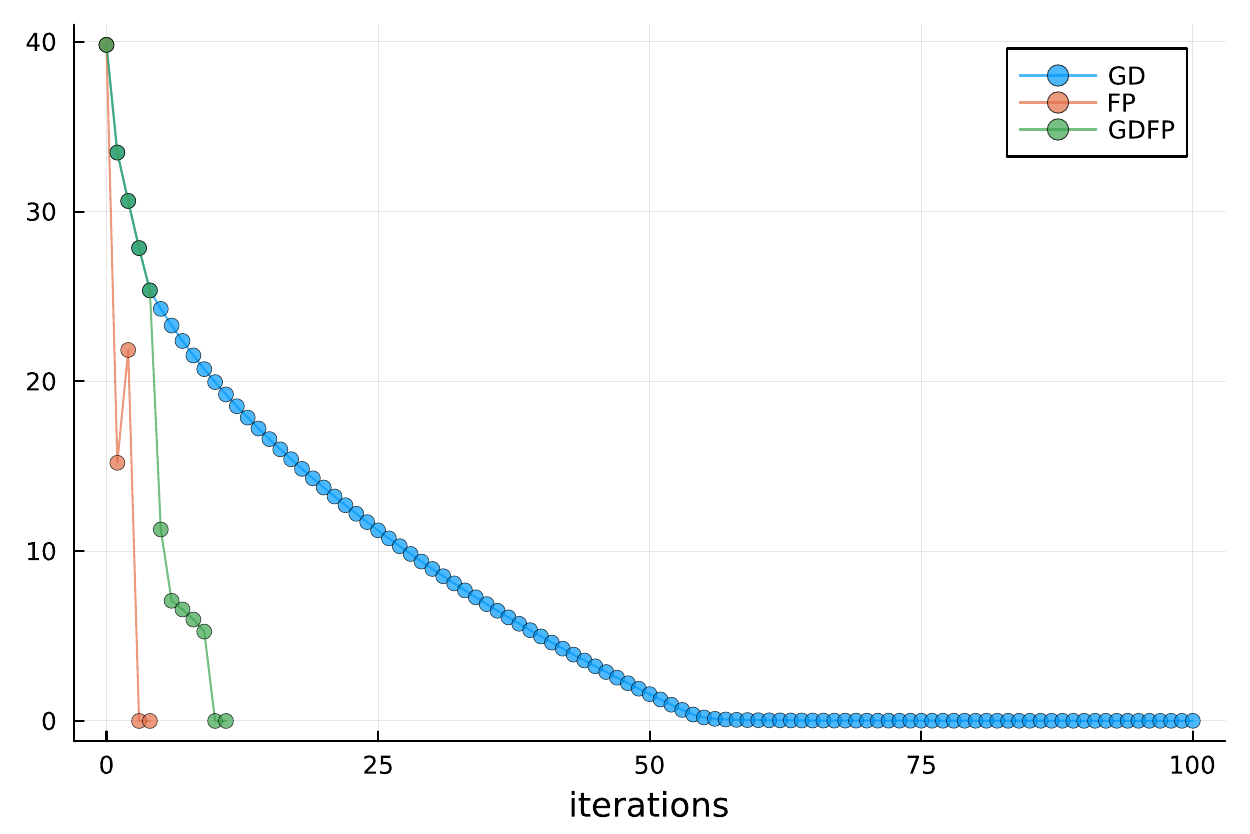}
        \caption{1x2}
        \label{fig:losses_1x2}
    \end{subfigure}
    \\
    \begin{subfigure}[t!]{0.4\textwidth}
        \includegraphics[width=\textwidth]{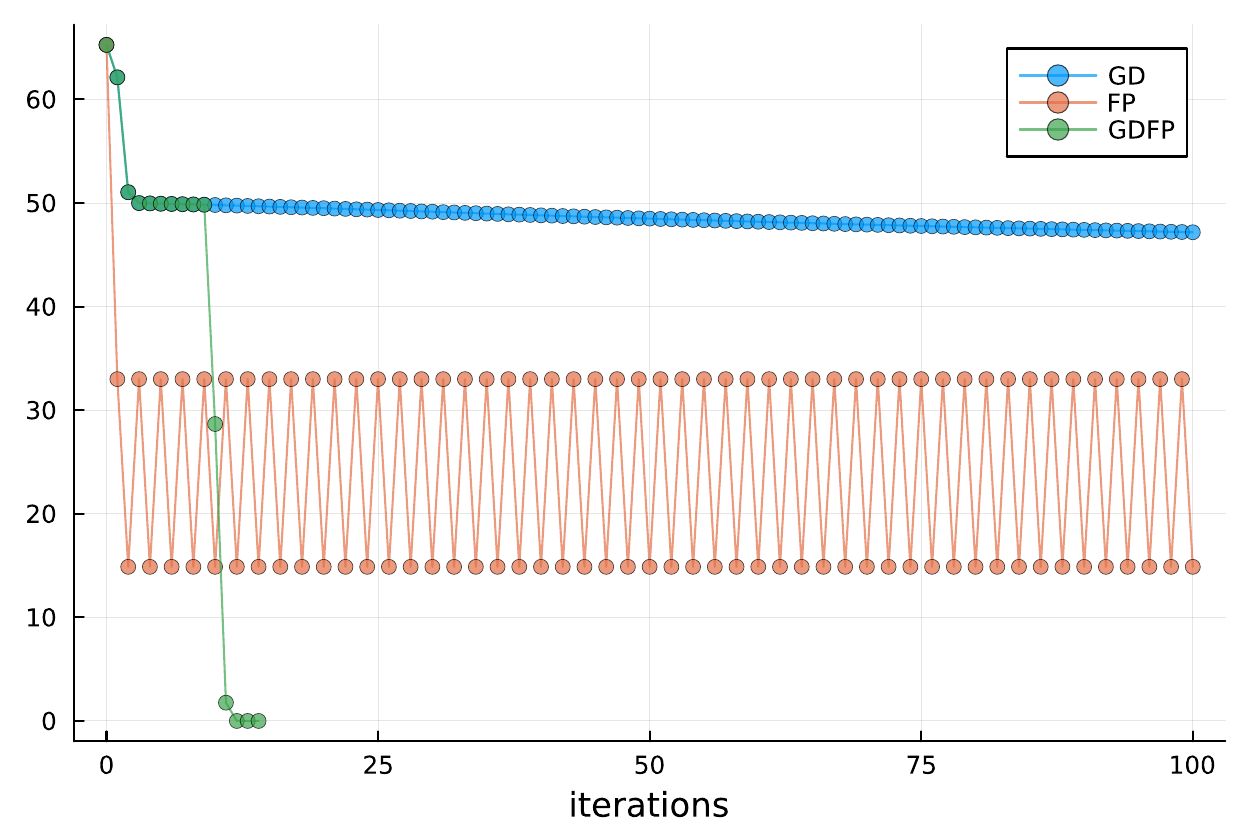}
        \caption{2x1}
        \label{fig:losses_2x1}
    \end{subfigure}
    \hfill
    \begin{subfigure}[t!]{0.4\textwidth}
        \includegraphics[width=\textwidth]{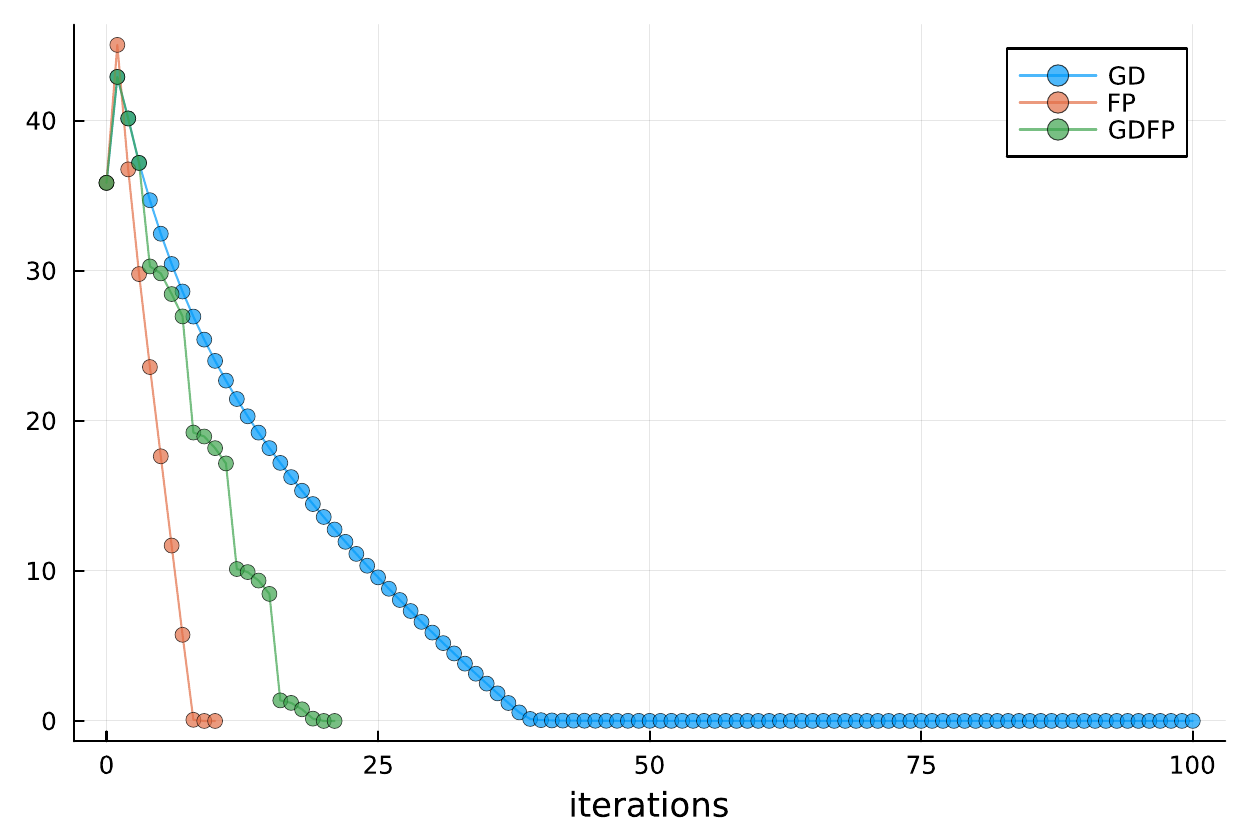}
        \caption{2x2}
        \label{fig:losses_2x2}
    \end{subfigure}
    \caption{(Single junctions) Cost functional as function of iterations for the gradient descent (GD), the fixed point (FP) and the hybrid (GDFP) methods. }
    \label{fig:single_junctions}
\end{figure}

\begin{figure}[H]
    \centering 
    \begin{subfigure}[t!]{0.4\textwidth}
        \includegraphics[width=\textwidth]{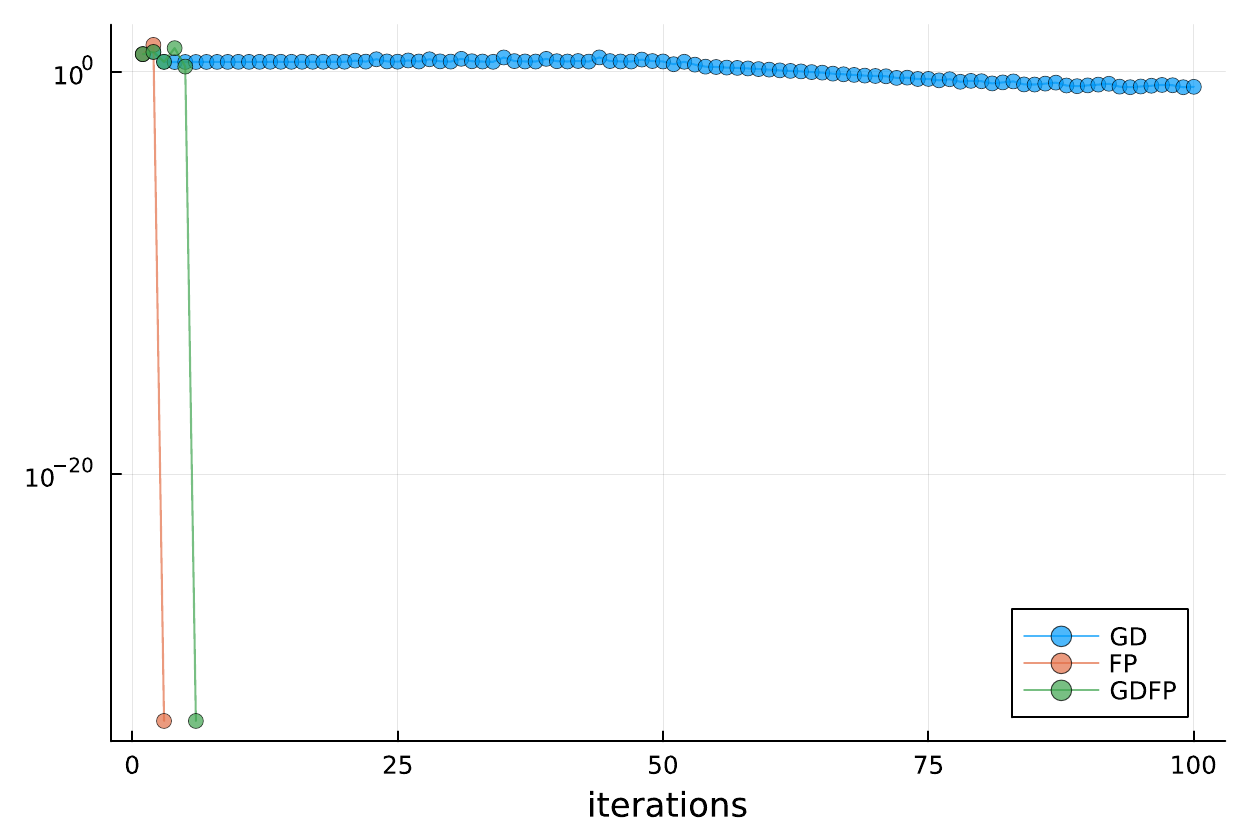}
        \caption{1x1}
        \label{fig:lambda_1x1}
    \end{subfigure}
    \hfill
    \begin{subfigure}[t!]{0.4\textwidth}
        \includegraphics[width=\textwidth]{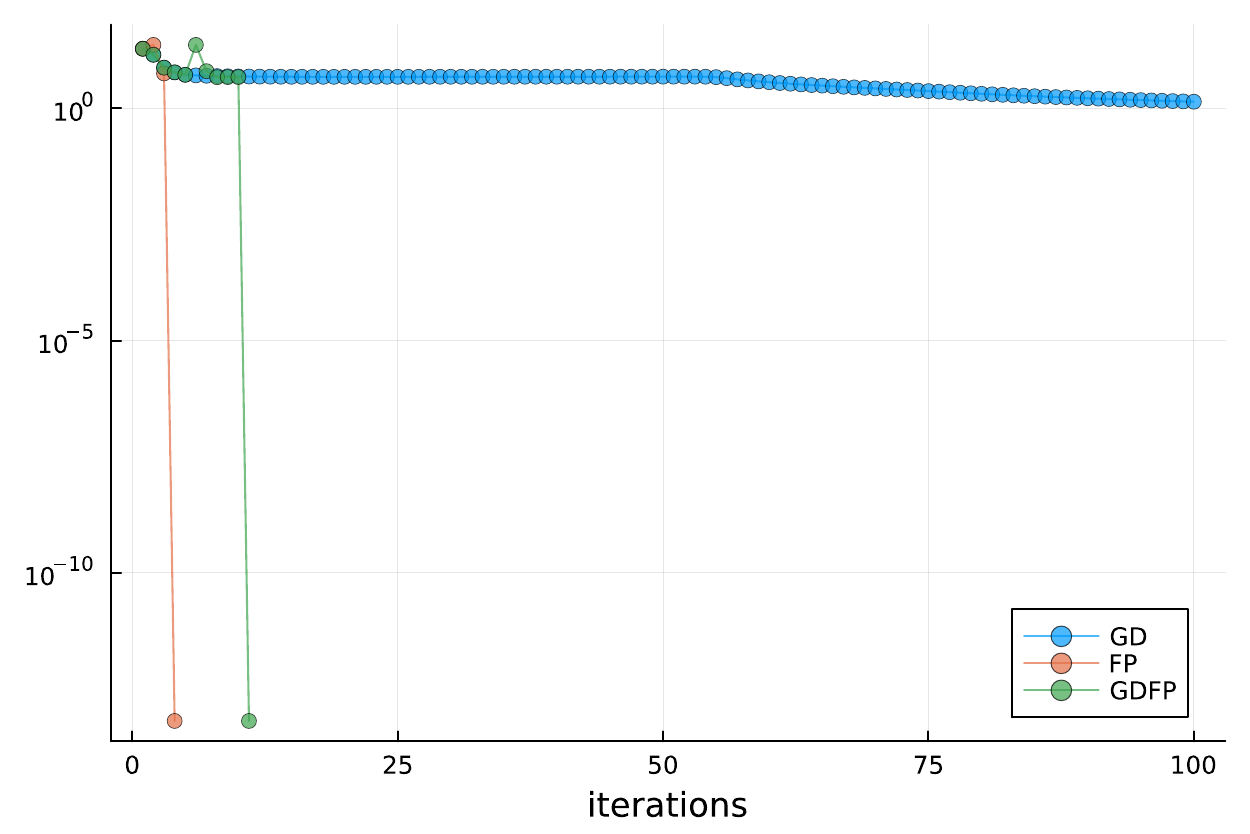}
        \caption{1x2}
        \label{fig:lambda_1x2}
    \end{subfigure}
    \\
    \begin{subfigure}[t!]{0.4\textwidth}
        \includegraphics[width=\textwidth]{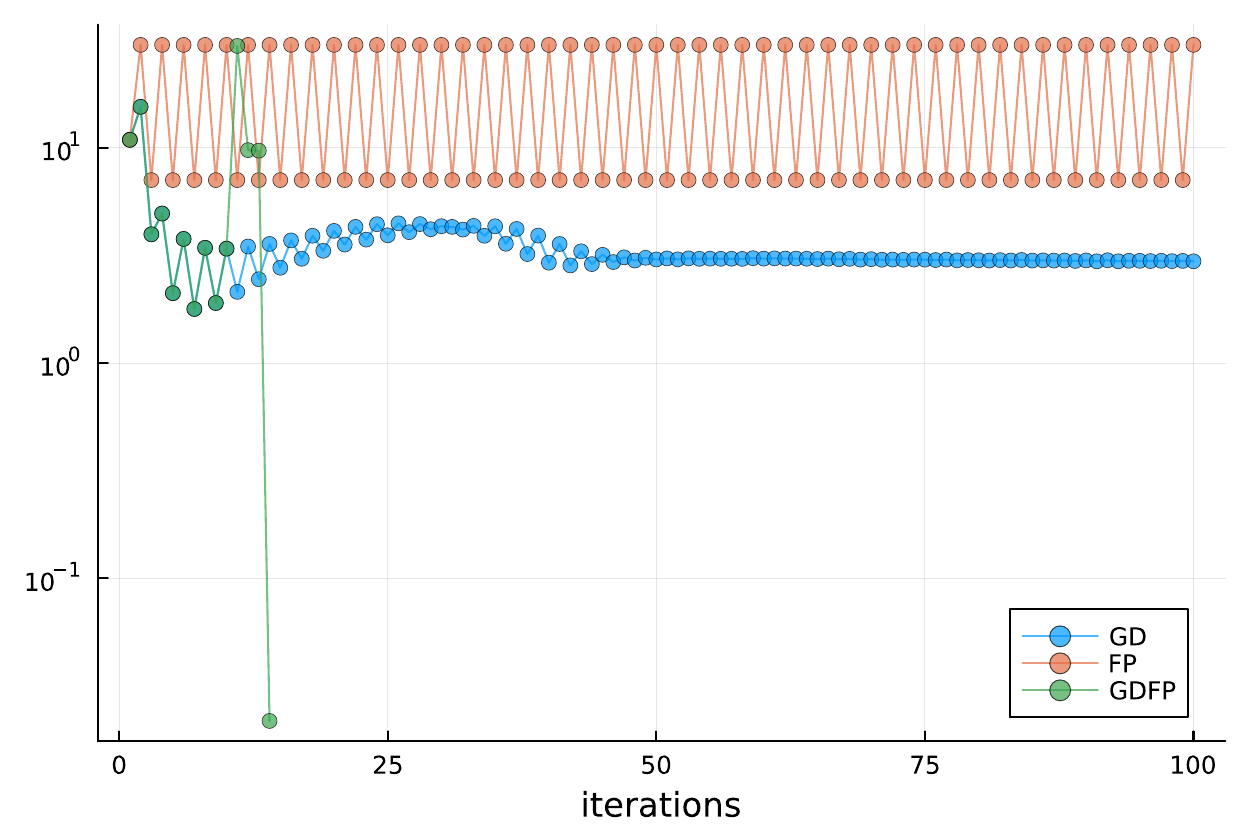}
        \caption{2x1}
        \label{fig:lambda_2x1}
    \end{subfigure}
    \hfill
    \begin{subfigure}[t!]{0.4\textwidth}
        \includegraphics[width=\textwidth]{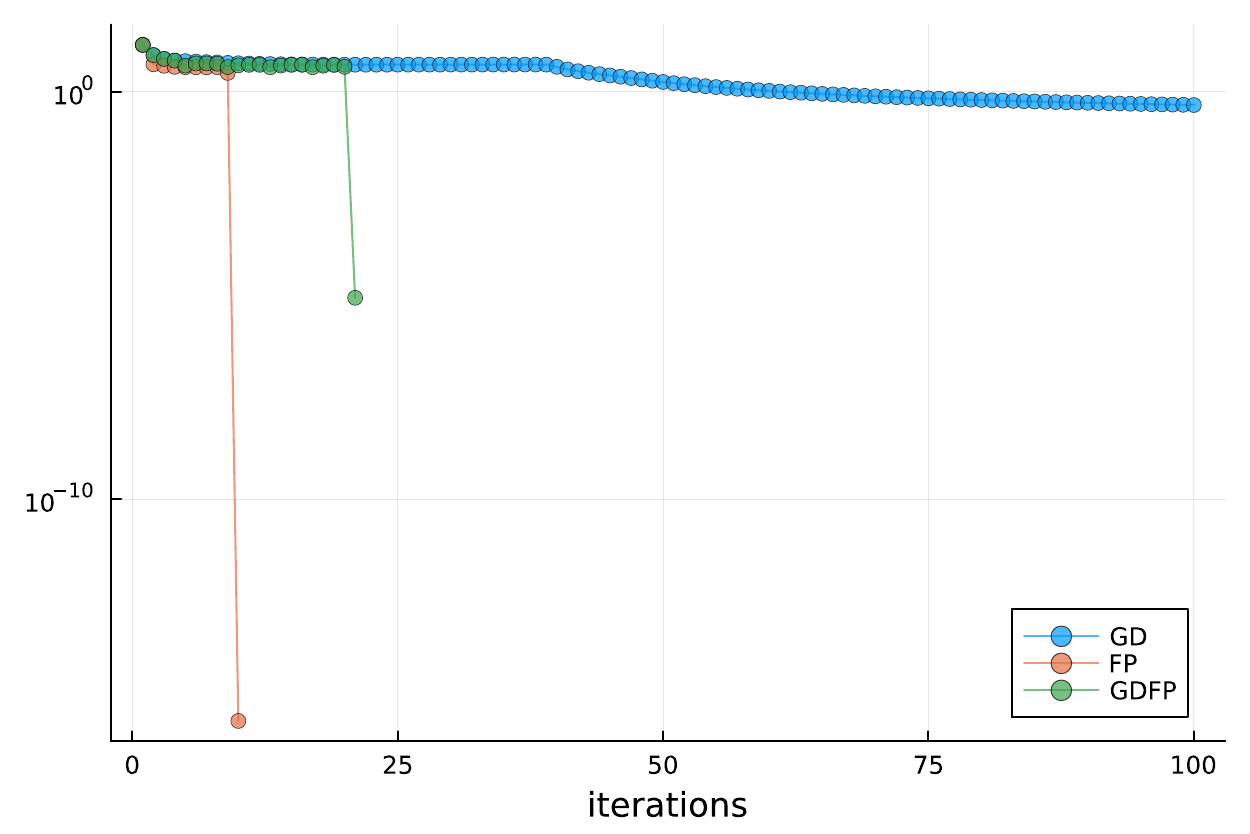}
        \caption{2x2}
        \label{fig:lambda_2x2}
    \end{subfigure}
    \caption{(Single junctions) Iterations of the convergence criterion $||\L||$.}
    \label{fig:single_junctions_lambda}
\end{figure}

\begin{figure}[H]
    \centering
    \begin{subfigure}[t!]{0.4\textwidth}
        \includegraphics[width=\textwidth]{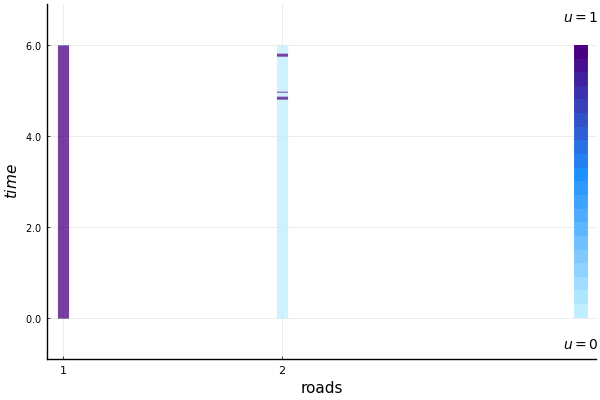}
        \caption{1x1}
        \label{fig:ctrl_1x1}
    \end{subfigure}
    \hfill
    \begin{subfigure}[t!]{0.4\textwidth}
        \includegraphics[width=\textwidth]{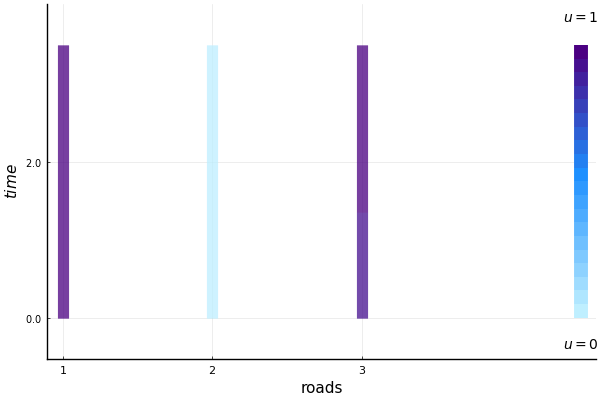}
        \caption{1x2}
        \label{fig:ctrl_1x2}
    \end{subfigure}
    \\
    \begin{subfigure}[t!]{0.4\textwidth}
        \includegraphics[width=\textwidth]{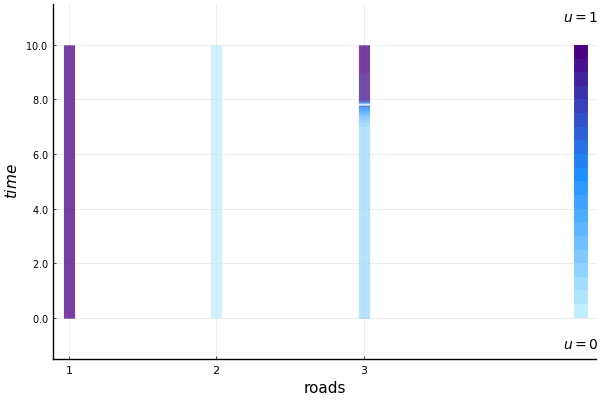}
        \caption{2x1}
        \label{fig:ctrl_2x1}
    \end{subfigure}
    \hfill
    \begin{subfigure}[t!]{0.4\textwidth}
        \includegraphics[width=\textwidth]{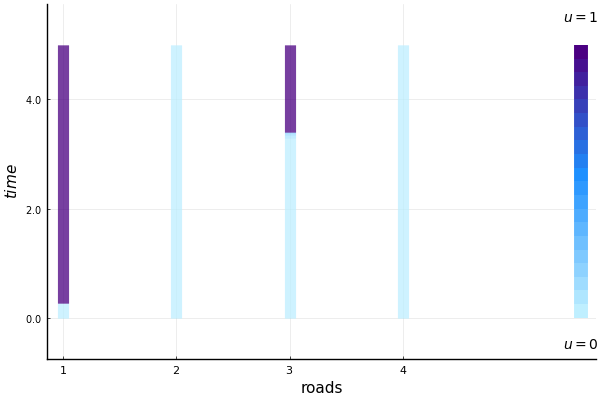}
        \caption{2x2}
        \label{fig:ctrl_2x2}
    \end{subfigure}
    \caption{(Single junctions) Optimal controls obtained with the GDFP method. The routes to empty are respectively $(1,2)$, $(1,3)$, $(1,3)$, $(1,3)$.}
    \label{fig:single_junctions_ctrl}
\end{figure}

\subsection{Traffic circle}

We consider the traffic circle test case as proposed in \cite{}. It is composed of $8$ roads as depicted in Fig.~\ref{fig:traffic_circle_route} including $2$ incoming roads, $2$ outgoing roads and the $4$ circle roads. Initial density is taken constant equal to $0.66$ on each road and, on Fig.~\ref{fig:traffic_circle_no_control}, we observe that congestion appears if no control is applied. 

The route to evacuate is chosen to be $(1,2,3,5,8)$ (see~Fig.~\ref{fig:traffic_circle_route}) and there is no constraints on the maximal number of controls ($\theta_S = 0$) or BV regularization ($\theta_B=0$). 
The parameters are given in \eqref{tab:threeways}. 

Results are gathered on Fig.~\ref{fig:traffic_circle_GDFP}. On the upper left panel, the cost functional history is depicted. The first four iterations of the gradient descent make the cost functional decrease very slowly, except for the second iteration, and then a FP step triggers the jump seen at iteration 5. A few GD steps are then observed and then another FP step results in a second jump at iteration 10. Then the gradient descent is able to reach a satisfactory local minimum by the 12th iteration. We note that the optimality function $\Lambda$ (Fig.~\ref{fig:traffic_circle_GDFP}) follows essentially the same behaviour and reaches a small value at the final iteration.
 
 The control obtained by the algorithm is given in  Fig.~\ref{fig:traffic_circle_GDFP_control}, bottom left. We observe first that roads $1$ and $6$ are allways controlled since they are entry roads on the network and would add new vehicles, and then that road $3$ entrance is always controlled to drive the flow to the outgoing road $4$. Furthermore, while road $5$ entrance is mostly controlled after time $t=8$ to let cars from road $3$ leave the route before, control on road $7$ entrance is quite the opposite: up to time $t=8$, it is activated to let the flow circulate as road $8$ entrance is open, then from time $t=8$ to $t=10$, it is deactivated as outgoing road $8$ entrance is now closed. Finally, on Fig.~\ref{fig:traffic_circle_GDFP_final_state}, we can check that the route is effectively empty at final time. 

\begin{table}[H]
    \centering
    \begin{tabular}{c|c|c|c|c|c|c|c|c|c|c}
    \toprule
    Parameters & $\vr_0$ & $\vu_0$ & $N_c$ & $\kappa$ & tol & $K_0$ & $\tau$ & T & $\delta_0$ & $\operatorname{decay}$  \\ 
    \midrule
    Traffic circle & $0.66$ & $1$ & $20$ & $10^{-6}$ & $10^{-2}$ & $5$ & 2 & $10$ & $5\times 10^{-2}$ & $10^{-1}$ \\
    Three lanes network & $0.66$ & $1$ & $5$ & $10^{-3}$ & $10^{-2}$ & $5$ & 3 & $30$ & $1$ & $10^{-2}$ \\ 
    \bottomrule
    \end{tabular}
    \caption{Numerical parameters for the \textit{traffic circle} and the \textit{three lanes} networks.}
    \label{tab:threeways}
\end{table}


\begin{figure}
    \centering
    \begin{subfigure}[t!]{0.4\textwidth}
        \includegraphics[width=\textwidth]{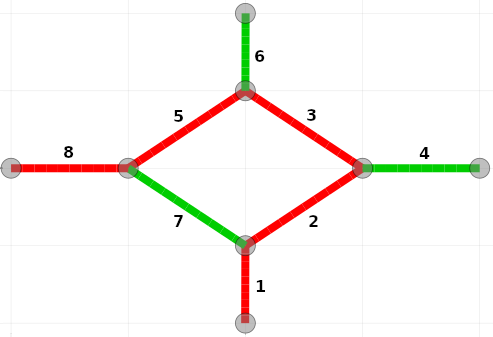}
        \caption{Road network and route to empty in red.}
         \label{fig:traffic_circle_route}
    \end{subfigure}
    \hfill
    \begin{subfigure}[t!]{0.4\textwidth}
        \includegraphics[width=\textwidth]{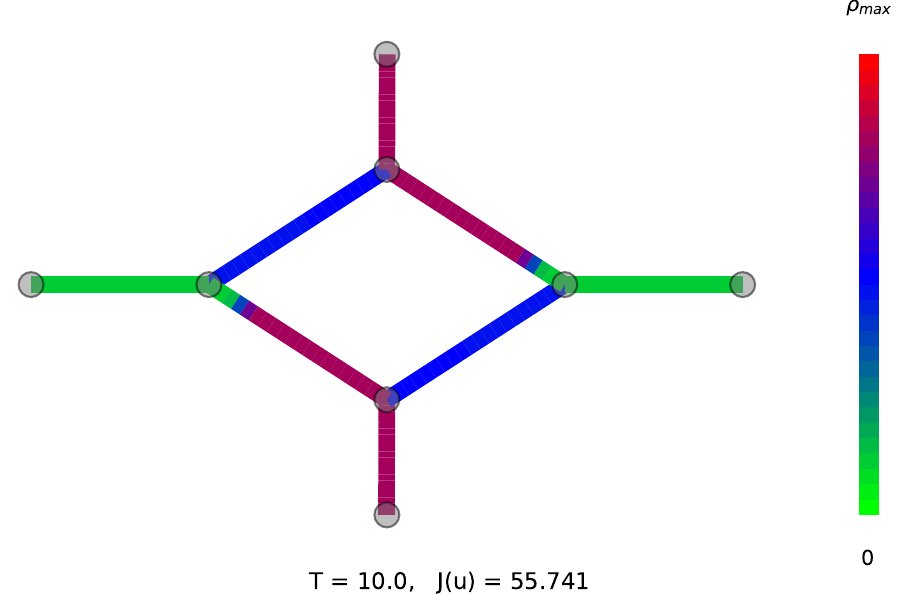}
        \caption{Density at final time without control.}
        \label{fig:traffic_circle_no_control}
    \end{subfigure}
    \caption{(Traffic circle) Left: Traffic circle configuration. The roads $(2,3,5,7)$ in the circle are counter-clockwise, roads $(1,6)$ are incoming and roads $(4,8)$ are outgoing. The route to empty $(1,2,3,5,8)$ is in red. Right: Numerical simulations without control at time $T=10$. }
    \label{fig:traffic_circle}
\end{figure}

\begin{figure}
    \centering
    \begin{subfigure}[t!]{0.4\textwidth}
        \includegraphics[width=\textwidth]{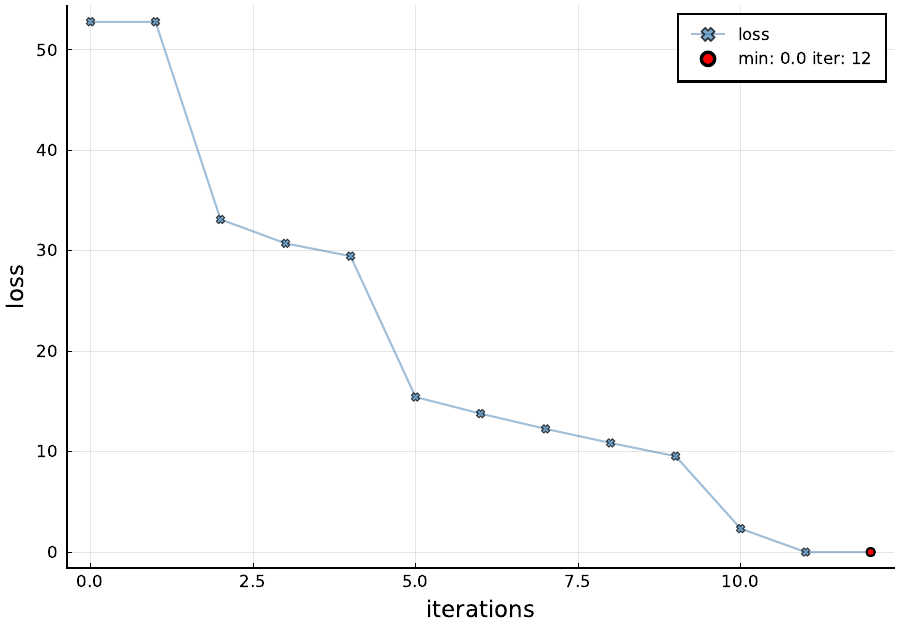}
        \caption{Cost functional history}
        \label{fig:traffic_circle_GDFP_cost}
    \end{subfigure}
    \hfill 
    \begin{subfigure}[t!]{0.4\textwidth}
        \includegraphics[width=\textwidth]{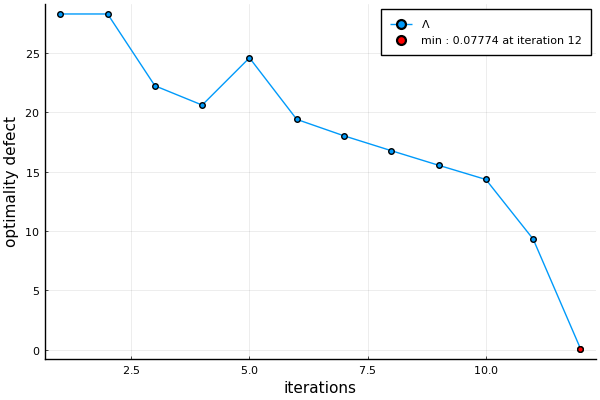}
        \caption{$||\L||_2$ history}
        \label{fig:traffic_circle_opt}
    \end{subfigure}\\
    \begin{subfigure}[t!]{0.4\textwidth}
        \includegraphics[width=\textwidth]{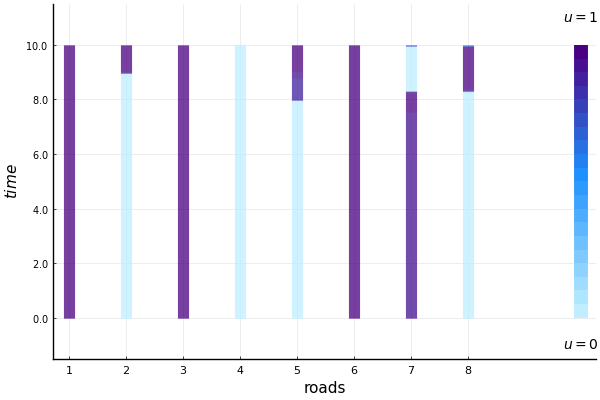}
        \caption{Control}
        \label{fig:traffic_circle_GDFP_control}
    \end{subfigure}
    \hfill 
    \begin{subfigure}[t!]{0.4\textwidth}
        \includegraphics[width=\textwidth]{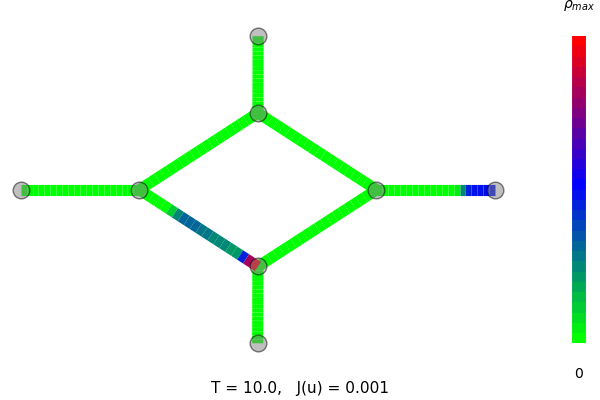}
        \caption{Density at final time}
        \label{fig:traffic_circle_GDFP_final_state}
    \end{subfigure}
    \caption{(Traffic circle). Results obtained with GDFP algorithm with spaced FP steps. }
    \label{fig:traffic_circle_GDFP}
\end{figure}

\subsection{Three lanes network}\label{sec:threeways}

We consider the three way network composed of $23$ roads as shown in Fig.~\ref{fig:threeways_redlane}: all the roads are directed from left to right. Initial density is sill constant equal to $0.66$ and a corresponding ingoing flux is imposed at the entrance of the network on road $1$. Neumann boundary conditions are used at the end of the outgoing road $11$. The statistical behaviors at each junction are uniform: there are no preferred trajectories.  Fig.~\ref{fig:threeways_no_ctrl} depicts the result of the simulation without control: the density on the central lane tends to be saturated as several roads lead onto it.

In the sequel, we would like to determine an optimal control in order to empty the central lane, made of roads $(4,6,7,8,9)$ (see Fig.~\ref{fig:threeways_redlane}), with $\Nm=5$.  Note that, contrary to the previous test cases, controlling this route may not prevent the flow to circulate from the ingoing to the outgoing road.  

\begin{figure}
    \centering
    \begin{subfigure}[t!]{0.4\textwidth}
        \includegraphics[width=\textwidth]{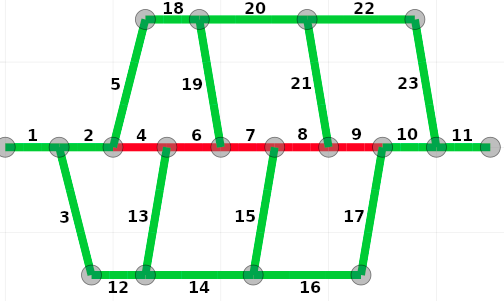}
        \caption{Road network and route to empty in red.}
        \label{fig:threeways_redlane}
    \end{subfigure}
    \hfill 
    \begin{subfigure}[t!]{0.4\textwidth}
        \includegraphics[width=\textwidth]{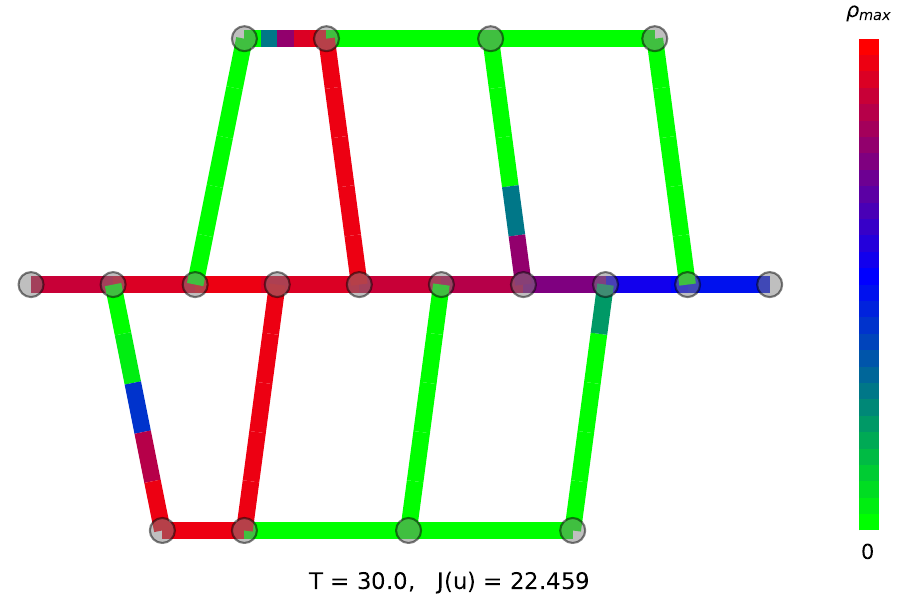}
        \caption{Density at final time without control.}
        \label{fig:threeways_no_ctrl}
    \end{subfigure}
    \caption{(Three lanes network) Left: Three lanes network configuration. Road $1$ is incoming and road $11$ is outgoing and all the others are directed from left to right. The route to empty $(4,6,7,8,9)$ is in red. Right: Numerical simulations without control at time $T=30$.}
    \label{fig:threeways}
\end{figure}

First we consider the optimal problem with essentially no limitation on the number of active controls ($\theta_S = 10^{-8}$) nor BV regularization ($\theta_B = 10^{-8}$). When $\Vert \Lambda\Vert_2$ becomes lower than $1$ we consider that we are close to a basin of attraction, and we give increased effect to regularization by setting $\theta_S=10^{-4}$ and $\theta_B=10^{-6}$. We use the numerical parameters of \eqref{tab:threeways}: in particular the FP steps are used at iterations $5$ and $15$. 

Fig.~\ref{fig:threeways_GDFP_loss} represents the cost functional history during the optimization process. We observe first a rapid decrease of the cost functional during the gradient descent steps, which reaches a basin of attraction at iteration $9$ with $\J=0.23$ and $\Vert \Lambda\Vert_2 = 1.001\times 10^{-2}$. Fig.~\ref{fig:threeways_GDFP_control_best} shows the control provided by the algorithm for this iteration. The time evolution of the road densities with this control is represented in Fig.~\ref{fig:threeways_GDFP_mean_best}. The control performs rather well regarding the final cost functional and the final road densities on the central line. However, as might be expected, this remains rather unsatisfactory given the relatively high number of active controls and the redundancies. For instance, it should be useless to block the entrance of roads 18, 20 and 22 if roads 19, 21 and 23 are controlled.

We then observe a slight increasing of the loss $\J$ when the regularization coefficients $\theta_S$ and $\theta_B$ are increased, mainly due to the high values of $S(\vu)$ and $B_\nu(\vu)$. After the fixed-point step at iteration $15$, all the coefficients of the loss are decreasing and reach the convergence tolerance by the $17$-th iteration, as shown in Fig.~\ref{fig:threeways_GDFP_optimality}. A zoom on this phenomena is given Fig.~\ref{fig:zoomGDFP}.

The cost $C_T(\vu)$ remains equal to $0.23$ whereas the FP step at iteration $15$ clearly reduced the staffing constraint $S(\vu)$ by more than an order of magnitude, at the cost of a slight increase in the total variation. Note that the GD steps were stationnary and that only the FP step led to convergence.

\begin{figure}[H]
    \centering
    \begin{subfigure}[t!]{0.4\textwidth}
        \includegraphics[width=\textwidth]{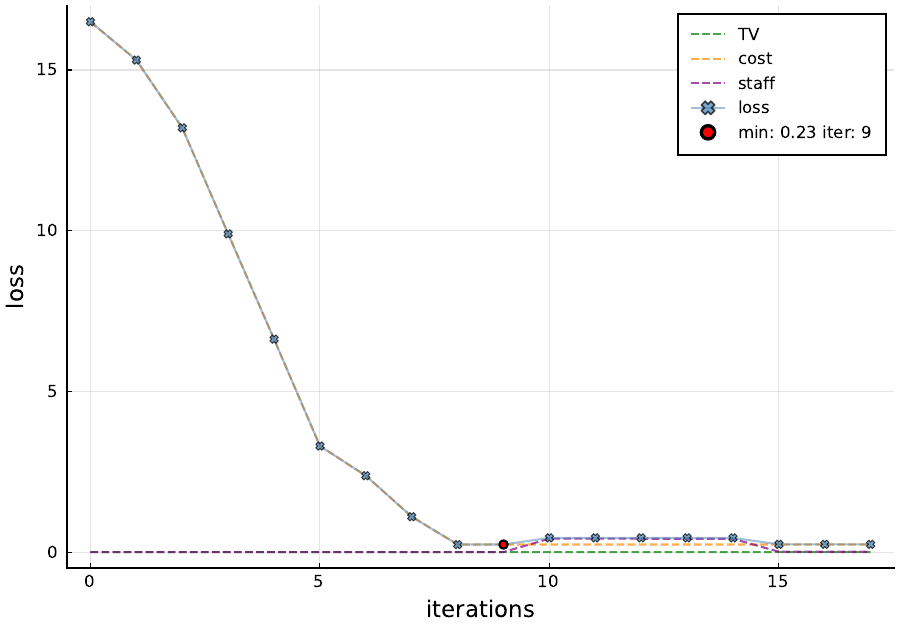}
        \caption{Cost functional.}
        \label{fig:threeways_GDFP_loss}
    \end{subfigure}
    \hfill 
    \begin{subfigure}[t!]{0.4\textwidth}
        \includegraphics[width=\textwidth]{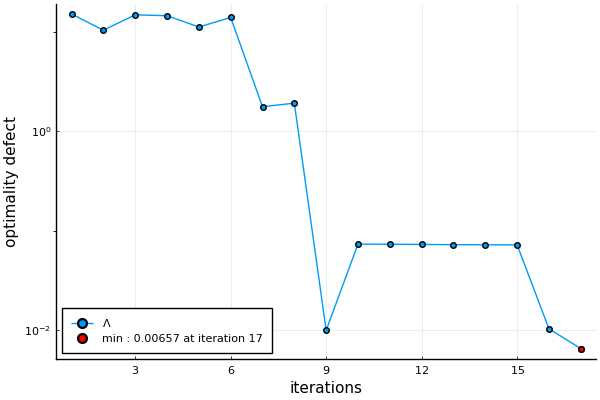}
        \caption{$||\L||_2$ history.}
        \label{fig:threeways_GDFP_optimality}
    \end{subfigure}
    \\
    \centering
       \begin{subfigure}[t!]{0.4\textwidth}
        \includegraphics[width=\textwidth]{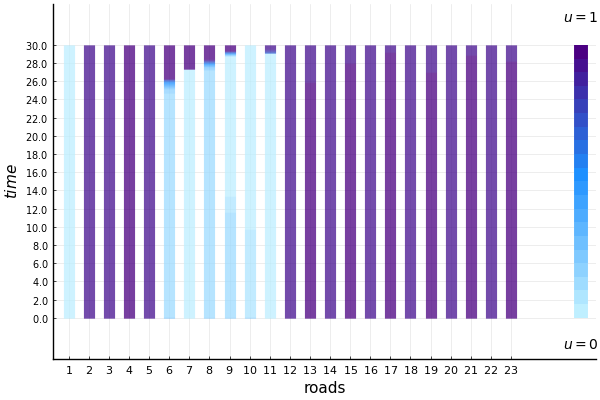}
        \caption{Control (low constraints).}
        \label{fig:threeways_GDFP_control_best}
    \end{subfigure}
    \hfill
    \begin{subfigure}[t!]{0.4\textwidth}
        \includegraphics[width=\textwidth]{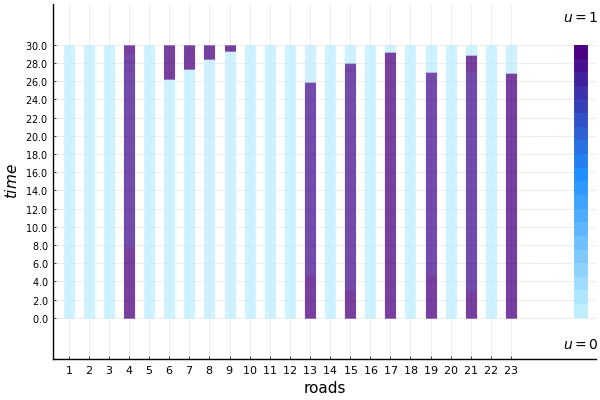}
        \caption{Control (increased constraints).}
    \end{subfigure}
    \caption{(Three lanes network with scheduled constraints) Controls obtained with the GDFP algorithm.} 
    \label{fig:threeways_GDFP_1}
\end{figure}

\begin{figure}[H]
    \centering 
    \begin{subfigure}[t!]{0.4\textwidth}
        \includegraphics[width=\textwidth]{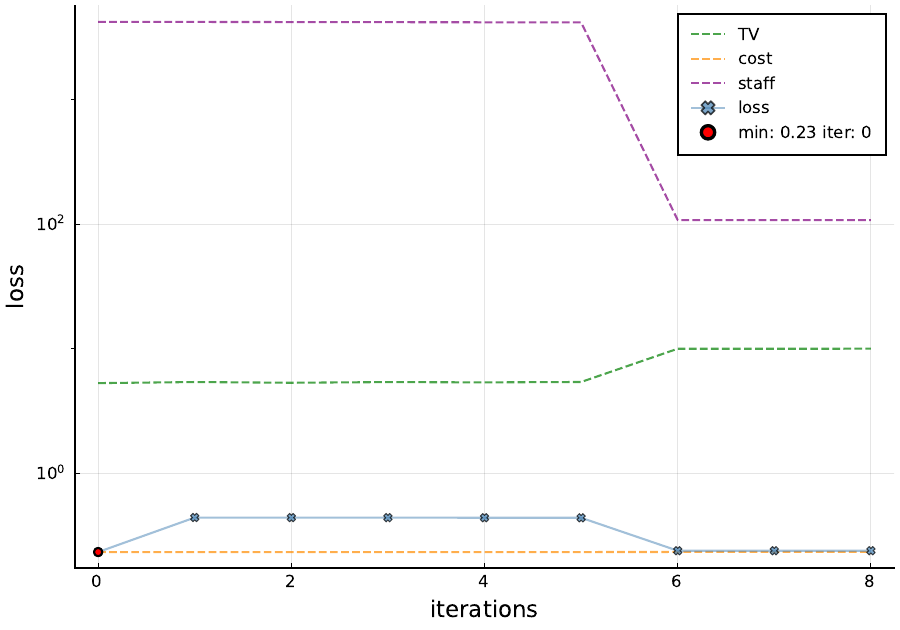}
        \caption{Without weights.}
        \label{fig:threeways_GDFP_loss_zoom}
    \end{subfigure}
    \hfill 
    \begin{subfigure}[t!]{0.4\textwidth}
        \includegraphics[width=\textwidth]{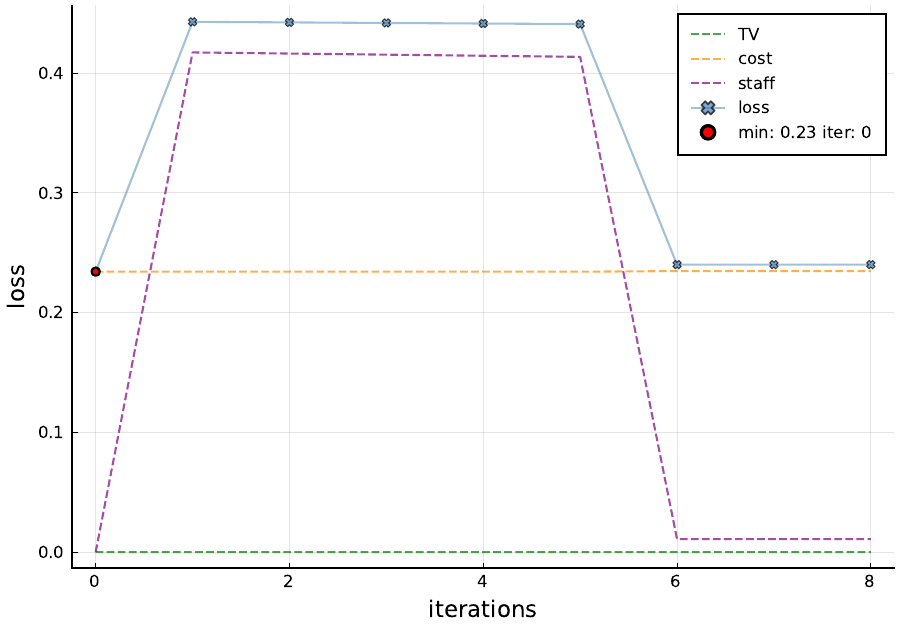}
        \caption{With weights.}
        \label{fig:threeways_GDFP_loss_zoom_weighted}
    \end{subfigure}
    \caption{Cost functional between iterations $9$ and $17$.}
    \label{fig:zoomGDFP}
\end{figure}

\begin{figure}[H]
    \centering
    \begin{subfigure}[t!]{0.4\textwidth}
        \includegraphics[width=\textwidth]{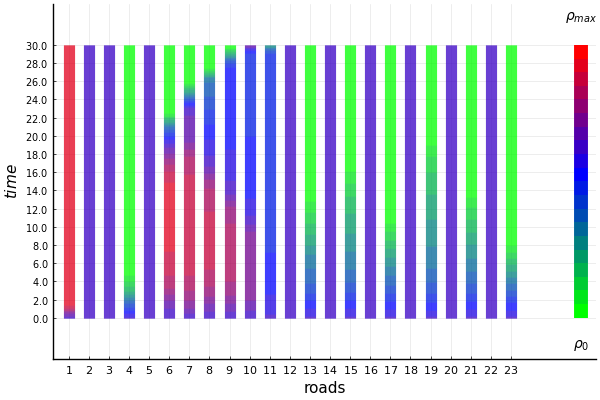}
        \caption{Mean density (low constraints).}
        \label{fig:threeways_GDFP_mean_best}
    \end{subfigure}
    \hfill 
    \begin{subfigure}[t!]{0.4\textwidth}
        \includegraphics[width=\textwidth]{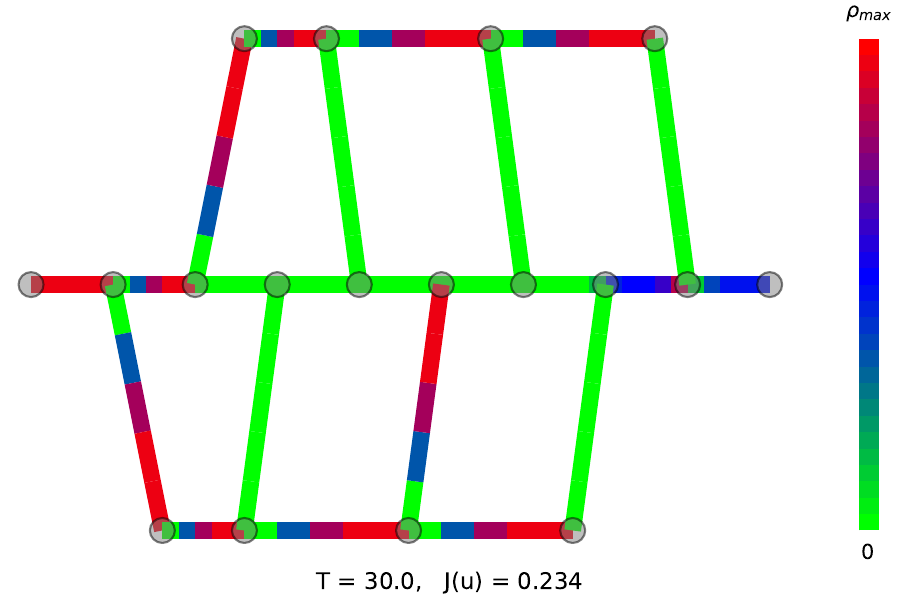}
        \caption{Final time snapshot (low constraints).}
    \end{subfigure}
    \\
    \begin{subfigure}[t!]{0.4\textwidth}
        \includegraphics[width=\textwidth]{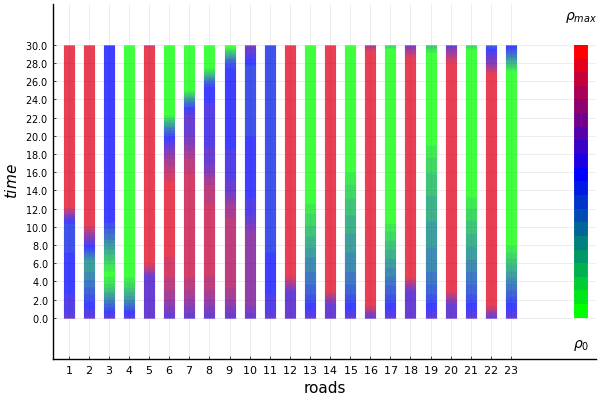}
        \caption{Mean density (increased constraints).}
    \end{subfigure}
    \hfill 
    \begin{subfigure}[t!]{0.4\textwidth}
        \includegraphics[width=\textwidth]{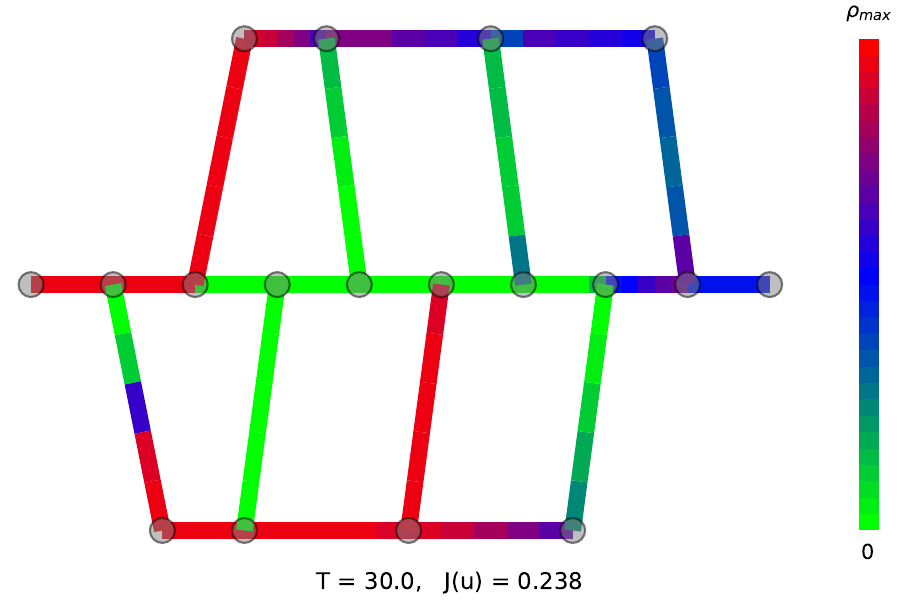}
        \caption{Final time snapshot (increased constraints).}
    \end{subfigure}
    \caption{(Three lanes network with scheduled constraints) Densities obtained with the GDFP algorithm.} 
    \label{fig:threeways_GDFP_2}
\end{figure}

The penalization algorithm has clearly solved the two issues: the number of active controls is now less than $7$, which is still higher than $5$ but far preferable than the earlier $16$ in a approached control point of view, and there is no time oscillations of the controls even if it would have been a mathematically correct way to reduce the staffing constraint. 

\begin{table}[H]
    \centering
        \begin{tabular}{c|l|c}
            \toprule
            Symbol & Name & Value \\ 
            \midrule
            $\Nm$ & maximum number of simultaneous blockages allowed & 5 \\
            $\theta_S^0$ & initial coefficient for the staffing constraint  & $10^{-8}$\\
            $\theta_S$ & scheduled coefficient for the staffing constraint & $10^{-4}$\\
            $\theta_B^0$ & initial coefficient for the BV constraint & $10^{-8}$ \\
            $\theta_B$ & scheduled coefficient for the BV constraint & $10^{-6}$ \\
            $\nu$ & smoothed absolute value coefficient & $10^{-10}$ \\
            \bottomrule
        \end{tabular}
    \caption{Numerical parameters for the full model.}
	\label{tab:extension} 
\end{table}

\section{Conclusion}
\label{sec:conclusion}

This article has therefore introduced a new network traffic control problem where the control acts not only on the flux constraints at the entrance of the road but also on the statistical behaviour matrices at the junctions. The theoretical analysis of the optimal control problem led to a hybrid numerical method combining the gradient descent method and a fixed-point strategy. The numerical results obtained on the  traffic circle and on the three-lane network have shown that the method is relatively effective in providing relevant controls. It is also possible to impose constraints on the number of active controls.

This work will be pursued in several directions. Firstly, it would be interesting to show theoretically that optimal controls are indeed bang-bang, i.e. that they take only the two extreme values 0 and 1, as suggested by our numerical results. This could lead to further improvements in the optimization algorithm. For example, we plan to formulate a shape optimization problem whose unknown is the (temporal) domain over which control is equal to 1, and then apply efficient algorithms for such problems, such as the level set method.   

Secondly, the indicator and projection functions make our algorithms sensitive to threshold or scaling effects. Further analysis could prove very useful in generically determining hyperparameters such as $\theta$ or $\kappa$, given that they are fixed by hand for the time being. Finally, the problem has been tackled here using a strategy of discretization (in space) followed by optimization. Another possibility would have been to optimize first and then discretize. Such a strategy may give rise to a different type of algorithm. 

A longer-term objective we plan to tackle in the coming years is to generalize and test our algorithm both on larger road networks and on real data. To this end, we will seek to couple this method with relevant initialization methods for which initial control would be obtained by considering a macroscopic graph and implementing model reduction methods. Such an approach has, for example, been developed in a different context in epidemiology~\cite{courtes:hal-03664271}.

\appendix
\begin{center}
\fbox{\textsf{\Large Appendix}}
\end{center}

\section{Expression of the function $\phi^{LP}$}\label{append:modelCont}
We provide hereafter the explicit expression of $\phi^{LP}$ for junctions with at most 2 ingoing and 2 outgoing roads. We will write $ (\vg^R, \vg^L)= \phi^{LP}(\vr,\vu)$.

\paragraph{One ingoing and one outgoing roads ($n=m=1$).}
In that case, the solution of the LP problem \eqref{pb:mainLP}
 is given by
\begin{equation*}\label{eq:one-one}
    \g_{1}^R = \g_2^L = \min\left(\g_{1}^{R,\max}, (1-u_2)\g_{2}^{L,\max}\right).
\end{equation*}
 
\paragraph{One ingoing and two outgoing roads ($n=1$, $m=2$).}
Setting $\a=\a_{21}$, we obtain
\begin{align*}
 \g_1^R &= \min\left( \g_1^{R,\max}, \min\left( (1-u_2)\frac{\g_2^{L,\max}}{\a}, (1-u_3)\frac{\g_3^{L,\max}}{1-\a} \right) \right), \\ 
 \g_2^L &= \a \g_1^R, \ \g_3^L = (1-\a) \g_1^R.
\end{align*}
However, this formulation raises a modeling problem. Indeed, this solution does not distinguish between a blocked road ($\g_j^{R,\max}=0$) and a controlled road ($1-u_j=0$). Indeed, the first case corresponds to a congestion involving cars on both sides of the intersection, preventing vehicles from entering the second -potentially empty- road, while in the second case, drivers will not try to enter the controlled road and we revert to the $1\times 1$ configuration with the remaining roads. 

To capture this difference in driver behavior, we will adjust the parameter $\a$ with respect to the control $\vu$. Therefore, $\alpha$ will be considered as a function of $(u_2,u_3)$. More precisely, 
we introduce a function denoted $P$ of the variable $x= u_2-u_3$, taking the constant value $\bar \a\in (0,1)$ at $x=0$ and yielding the relevant $1\times 1$ case whenever one road is fully controlled. For the sake of simplicity, we look for a function $(u_2,u_3) \mapsto P(u_2 - u_3)$, where $P$ is a polynomial of degree at most 2, that satisfies
$$
P(-1) = \a(0,1) = 1, \quad P(0)  = \a(u,u) =\bar \a,\quad P(1)  = \a(1,0) = 0.
$$
The standard Lagrange interpolation formula
yields
\begin{equation}
    P(x) = \frac{x(x-1)}{2} +\bar \a(1-x^2).
\end{equation}
However, due to the division by $\alpha$ in the expression of $\gamma_1^R$, the case $\a(u_2=1, u_3=0) = 1-u_2 = 0$ is degenerate. To overcome this definition problem, we introduce a small perturbation parameter $\e>0$ and a modification of the polynomial $P$ denoted $P_\e$, such that $\a_\e := P_\e(u_2-u_3)$ and
\begin{equation*}
    P_\e(-1) = 1-\e^2, \quad P_\e(0) = \a, \quad P_\e(1) = \e^2.
\end{equation*}
The same reasoning as above yields
$
    P_\e(x) = P(x) + \e^2 x.
$
Since the interpolation procedure does not guarantee that the range of the function is contained in $[\e^2, 1-\e^2]$, we compose the obtained expression with a projection onto the set of admissible values, to get at the end 
\begin{equation}
    \a_\e(u_2,u_3) := \operatorname{proj}_{[\e^2, 1-\e^2]}\left(P_\e(u_2-u_3)\right).
\end{equation}
Similarly, we also replace the expression $(1-u_j)$ by $(1-u_j + \e)$ to avoid the case where $(1-u_2)=\a(u_2,u_3)=0$. 
We finally obtain the following regularized solution at $1\times 2$ junctions
\begin{align*}
	\g_1^R &= \min\left( \g_1^{R,\max}, \min\left( (1-u_2+\e)\frac{\g_2^{L,\max}}{\a_\e(u_2,u_3)}, (1-u_3+\e)\frac{\g_3^{L,\max}}{1-\a_\e(u_2,u_3)} \right) \right), \\ 
	\g_2^L &= \a_\e \g_1^R, \ \g_3^L = (1-\a_\e) \g_1^R.
\end{align*}

\paragraph{Two ingoing and one outgoing roads ($n=2$, $m=1$).} 
The ingoing roads are indexed by $1,2$ and the outgoing one is indexed by 3. In any case, we have $\g_3^L = \g_1^R + \g_2^R$, and $\g_1^R, \g_2^R$ are computed as follows:
\begin{itemize}
    \item we set $\g_{3,\vu}^{L,\max} := (1-u_3)\,\g_3^{L,\max}$;
    \item if $\g_1^{R,\max} + \g_2^{R,\max} \leq \g_{3,u}^{L,\max}$, then $\g_1^R = \g_1^{R,\max}$ and $\g_2^R = \g_2^{R,\max}$;
    \item else,
        \begin{itemize}
            \item if $\g_1^{R,\max} \geq q\,\g_3^{L,\max}$ and $\g_2^{R,\max} \geq (1-q)\g_{3,u}^{L,\max}$, then $\g_1^R = q \, \g_{3,u}^{L,\max}$ and  $\g_2^R = (1-q) \, \g_{3,u}^{L,\max}$.
        \item if $\g_1^{R,\max} < q\g_{3,u}^{L,\max}$ and $\g_2^{R,\max} \geq (1-q)\g_{3,u}^{L,\max}$, then $\g_1^R = \g_1^{R,\max}$ and $\g_2^R = \g_3^{L,\max} - \g_1^{R,\max}$.
        \item if $\g_1^{R,\max} \geq q\g_{3,u}^{L,\max}$ and $\g_2^{R,\max} < (1-q)\g_{3,u}^{L,\max}$, then $\g_1^R = \g_{3,u}^{L,\max} - \g_2^{R,\max}$ and $\g_2^R = \g_2^{R,\max}$.
        \end{itemize}
\end{itemize}

\paragraph{Two ingoing and two outgoing roads ($n=m=2$).} 
The ingoing roads are indexed by $1,2$ and the outgoing ones are indexed by $3,4$. We have \begin{equation*}
    \begin{pmatrix} \g_3 & \g_4 \end{pmatrix}^\top = A(\vu)\begin{pmatrix} \g_1 & \g_2 \end{pmatrix}^\top
    \quad \text{with}\quad     A(\vu)=\begin{pmatrix} \a(\vu) & \b(\vu) \\ 1-\a(\vu) & 1-\b(\vu) \end{pmatrix},
\end{equation*} 
where, given two real number $\bar \a$, $\bar \b$ in $(0,1)$, the coefficients are defined in the same way as in the case $1\times 2$, as
$$
    \a_\e(u) = \operatorname{proj}_{[\e^2, 1-\e^2]} \left(P^{\bar \a}_\e(u_3-u_4)\right),\quad
    \b_\e(u) = \operatorname{proj}_{[\e^2, 1-\e^2]} \left(P^{\bar \b}_\e(u_3-u_4)\right).
$$
Here, $P^\xi_\e$ is defined by
\begin{equation}\label{eq:poly}
    P^\xi_\e(x) = \frac{x(x-1)}{2} + \xi(1-x^2) + \e^2 x, \quad \xi\in\{\bar \a,\bar \b\}.
\end{equation}

Let us set $\g_{i,\vu}^{L,\max} := (1-u_i)\,\g_i^{L,\max}$, for $i=3,4$.

To solve the LP $2\times 2$ problem, we first analyze the simplex standing for the polytope of constraints, represented on \Cref{fig:lp2x2easy}: the constraints related on the incoming routes form the rectangle $\Om_{in} := [0,\g_1^{R,\max}]\times[0,\g_2^{R,\max}]$ while those related on the outgoing roads correspond to the regions below the curves $\mathcal{C}_3$ and $\mathcal{C}_4$, respective graphs of the functions
\[
 \g_1 \mapsto \frac{(1-u_3)\g_3^{L,\max}-\au\g_1}{\bu} \quad\text{and}\quad \g_1 \mapsto \frac{(1-u_4)\g_4^{L,\max}-(1-\au)\g_1}{1-\bu}.
\]
A convexity argument, standard in linear optimization, yields that the exists (at least) a solution lying on a vertex of the set of constraints.  To discuss on optimality of each vertex, we first compute $\vG_\vu$, the intersection of $\mathcal{C}_3$ and $\mathcal{C}_4$.

\begin{figure}[h]
    \centering
    \begin{tikzpicture}[line cap=round, line join=round, >=triangle 45, x=20.0cm, y=20.0cm]
    \draw[->,color=black] (-0.1,0) -- (0.4,0);
    \foreach \x in {,0.1,0.2,0.3}
    \draw[->,color=black] (0,-0.05) -- (0,0.3);
    \foreach \y in {,0.1,0.2,0.3}
    \draw[color=black] (0pt,-10pt) node[right] {\footnotesize $0$};
    \clip(-0.11,-0.05) rectangle (0.4,0.3); 
    \fill[color=zzttqq,fill=zzttqq,fill opacity=0.1] (0,0) -- (0.25,0) -- (0.25,0.25) -- (0,0.25) -- cycle;
    \draw [domain=-0.11:0.4] plot(\x,{(--0.25-0*\x)/1});
    \draw (0.25,-0.05) -- (0.25,0.3);
    \draw [color=qqwwff,domain=-0.11:0.4] plot(\x,{(--0.51-1.27273*\x)/1});  
    \draw [color=ffwwqq,domain=-0.11:0.4] plot(\x,{(--0.40071-0.78571*\x)/1}); 
    \draw [color=ffqqqq,domain=-0.11:0.4] plot(\x,{(--0.44989-1*\x)/1});
    \draw [color=zzttqq] (0,0)-- (0.25,0);
    \draw [color=zzttqq] (0.25,0)-- (0.25,0.25);
    \draw [color=zzttqq] (0.25,0.25)-- (0,0.25);
    \begin{scriptsize} 
        \draw[] (0.39,-0.02) node {$\g_1$}; 
      \draw[] (-0.02,0.28) node {$\g_2$};
        \draw[color=ffwwqq] (0.35,0.15) node {$\mathcal{C}_3$}; 
    \draw[color=qqwwff] (0.30,0.10) node {$\mathcal{C}_4$};
    \fill [color=zzqqzz] (0.2244,0.2244) circle (1.5pt);
    \draw[color=zzqqzz] (0.24,0.235) node {$\vG_\vu$};
    \end{scriptsize}
    \end{tikzpicture}
    \caption{Polytope of constraints for the 2x2 LP.
    The constraints on the input flow are represented by the colored area.\label{fig:lp2x2easy}}   
\end{figure}
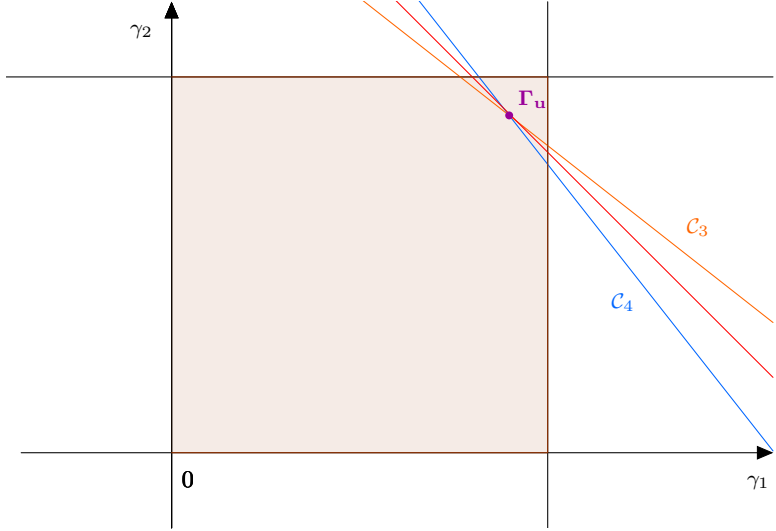

The point $\vG_{\vu} = (\G_{1\vu}, \G_{2\vu})$ is the intersection point of the lines with cartesian equations
$\au\g_1+\bu\g_2 = \g_{3,\vu}^{\max}$ and $(1-\au)\g_1 + (1-\bu)\g_2 = \g_{4,\vu}^{\max}$.
We obtains
\begin{equation}
    \begin{cases} 
        \G_{1\vu} = \left( (1-\bu)\g_{3,\vu}^{\max} - \bu\g_{4,\vu}^{\max} \right) / \D_{\vu}, \\ 
        \G_{2\vu} = \left( -(1-\au)\g_{3,\vu}^{\max} - \au\g_{4,\vu}^{\max} \right) / \D_{\vu}, \\  
    \end{cases} 
\end{equation} 
with $\D_{\vu} = \au(1-\bu) - \bu(1-\au)$.
A standard graphical reasoning shows that the solution is realized at $\vG_\vu$ if $\vG_\vu$ belongs to $\Om_{in}$, as in~Fig.~\ref{fig:lp2x2easy}. Otherwise, we have to intersect the constraint of highest slope (in absolute value) with $\Om_{in}$. It is this disjunction that is performed in what follows.

\begin{itemize}
    \item {\bf First case:} $\G_{1\vu}\leq\g_1^{\max}$ and $\G_{2\vu}\leq\g_2^{\max}$;
    \begin{itemize}
        \item if $\G_{1\vu} >= 0$ and $\G_{2\vu}<0$, then $\g_2=0$;
            \begin{itemize}
                \item if $\au < \bu$, then $\g_1=\g_{3,\vu}^{\max} / \au$;
                \item else, $\g_1=\g_{4,\vu}^{\max} / (1-\au)$.
            \end{itemize}
        \item if $\G_{1\vu} < 0$ \textbf{and} $\G_{2\vu} \geq 0$, then $\g_1=0$;
            \begin{itemize}
                \item if $\au < \bu$, then $\g_2 = \g_{3\vu}^{\max} / \bu$;
                \item else, $\g_2 = \g_{4\vu}^{\max} / (1-\bu)$.
            \end{itemize}
        \item if $\G_{1\vu}<0$ \textbf{and} $\G_{2\vu}<0$, then $\g_1 = \g_2 = 0$;
        \item else, $(\g_1,\g_2) = (\G_{1\vu},\G_{2\vu})$.
    \end{itemize}
    \item {\bf Second case:} $\G_{1\vu} > \g_1^{\max}$ and $\G_{2\vu} > \g_2^{\max}$. Then, $(\g_1,\g_2) = (\g_1^{\max}, \g_2^{\max})$.
    \item {\bf Third case:} $\G_{1\vu} > \g_1^{\max}$ and $\G_{2\vu} \leq \g_2^{\max}$;
    \begin{itemize}
        \item if $\au<\bu$, then 
$$
 \g_2=\operatorname{proj}_{[0,\g_2^{\max}]}\left(\frac{\g_{3\vu}^{\max}-\au\g_1^{\max}}{\bu}\right), \quad  
            \g_1=\left\{\begin{array}{ll}
            \g_3^{\max}/\au & \text{if }\g_2=0\\
            \g_1^{\max} & \text{if }\g_2\in(0,\g_2^{\max}];
            \end{array}\right. 
$$
        \item else,
$$
\g_2=\operatorname{proj}_{[0,\g_2^{\max}]}\left(\frac{\g_{4\vu}^{\max}-(1-\au)\g_1^{\max}}{1-\bu}\right), \quad 
\g_1=\left\{\begin{array}{ll}
\g_4^{\max}/(1-\au) & \text{if }\g_2=0 \\
 \g_1^{\max}& \text{if }\g_2\in(0,\g_2^{\max}];
            \end{array}\right. 
  $$
    \end{itemize} 
    \item {\bf Fourth case:} $\G_{1\vu}\leq\g_1^{\max}$ and $\G_{2\vu}>\g_2^{\max}$;
    \begin{itemize}
        \item if $\au>\bu$, then 
$$
 \g_1=\operatorname{proj}_{[0,\g_1^{\max}]}\left(\frac{\g_{3\vu}^{\max}-\bu\g_1^{\max}}{\au}\right), \quad 
\g_2=\left\{\begin{array}{ll}
\g_3^{\max}/\bu \text{if } \g_1=0 \\
 \g_2^{\max} & \text{if }\g_1\in(0,\g_1^{\max}]; 
            \end{array}\right. 
  $$
         \item else,
$$
                \g_1=\operatorname{proj}_{[0,\g_1^{\max}]}\left(\frac{\g_{4\vu}^{\max}-(1-\bu)\g_1^{\max}}{1-\au}\right), \quad
                \g_2=\left\{\begin{array}{ll}
                \g_4^{\max}/(1-\bu) & \text{if }\g_1=0 \\
                \g_2^{\max}& \text{if } \g_1\in(0,\g_1^{\max}]. 
   \end{array}\right. 
   $$
    \end{itemize}
\end{itemize}

\paragraph{Regular approximation of $\phi^{LP}$.}
In the following, in order to use differentiable optimization techniques, we will replace the function $\phi^{LP}$, which is only Lipschitz, by an approximation $C^1$, replacing all operations consisting in taking the minimum or the maximum of two quantities by regular approximations:
$$
\min\{x,y\}\leftarrow \frac{x+y-\sqrt{(x+y)^2+\eta^2}}{2}\quad \text{and}\quad \max\{x,y\}\leftarrow \frac{x+y+\sqrt{(x+y)^2+\eta^2}}{2}
$$ 
for a given $\eta>0$.
\begin{remark}[Some remarks on the LP problem]
It is notable that In \cite{Canic15}, a close LP problem has been solved. However, we found that a case corresponding to a particular constraint configuration was forgotten in their analysis.
This error could lead to erroneous vertices providing negative values to some flows. The configuration that was not taken into account in this publication is illustrated on~Fig.~\ref{fig:lp2x2corrected}, for parameter choices from~\eqref{tab:lp2x2corrected}.

\begin{table}[h]
    \centering
    \begin{tabular}{c|c|c|c|c|c|c|c|c}
        \toprule
        Parameter & $\g_1^{\max}$ & $\g_2^{\max}$ & $\g_3^{\max}$ & $\g_4^{\max}$ & $u_3$ & $u_4$ & $\a$ & $\b$ \\
        \midrule
        Value     & 0.25 & 0.25 & 0.2244 & 0.2244 & 0 & 0.96 & 0.45 & 0.50  \\
        \bottomrule
    \end{tabular}
    \caption{Parameters for the 2x2 LP problem in~\eqref{fig:lp2x2corrected}.}
    \label{tab:lp2x2corrected}
\end{table}

\begin{figure}[h]
\centering
\begin{subfigure}[t!]{0.4\textwidth}
    \begin{tikzpicture}[line cap=round,line join=round,>=triangle 45,x=7.0cm,y=7.0cm]
    \draw[->,color=black] (-0.1,0) -- (1.3,0);
    \draw[->,color=black] (0,-0.9) -- (0,0.32);
    \draw[color=black] (0pt,-10pt) node[right] {\footnotesize $0$};
    \clip(-0.1,-0.9) rectangle (1.3,0.32);
    \fill[color=zzttqq,fill=zzttqq,fill opacity=0.1] (0,0) -- (0.25,0) -- (0.25,0.25) -- (0,0.25) -- cycle;
    \draw [domain=-0.1:1.3] plot(\x,{(--0.25-0*\x)/1});
    \draw (0.25,-0.9) -- (0.25,0.32);
    \draw [color=qqwwff,domain=-0.1:1.3] plot(\x,{(--0.45-1.2*\x)/1});
    \draw [color=ffwwqq,domain=-0.1:1.3] plot(\x,{(--0.23-1*\x)/1});
    \draw [color=zzttqq] (0,0)-- (0.25,0);
    \draw [color=zzttqq] (0.25,0)-- (0.25,0.25);
    \draw [color=zzttqq] (0.25,0.25)-- (0,0.25);
    \begin{scriptsize}
    \fill [color=zzqqzz] (1.1,-0.87) circle (1.5pt);
    \draw[color=zzqqzz] (1.12,-0.85) node {$\vG_\vu$};
    \fill [color=qqzztt] (0.23,0) circle (2.0pt);
    \draw[color=qqzztt] (0.23,0.03) node {$\bm{S}$};
    \fill [color=ffqqqq] (0.25,-0.02) circle (2.0pt);
    \draw[color=ffqqqq] (0.267,-0.05) node {$\bm{F}$};
    \draw[color=ffwwqq] (0.60,-0.45) node {$\mathcal{C}_3$};
    \draw[color=qqwwff] (0.70,-0.30) node {$\mathcal{C}_4$};
    \end{scriptsize}
    \end{tikzpicture}
\end{subfigure}
\hfill 
\begin{subfigure}[t!]{0.35\textwidth}
    \begin{tikzpicture}[line cap=round,line join=round,>=triangle 45,x=50.0cm,y=50.0cm]
    \draw[->,color=black] (0.2,0) -- (0.3,0);
    \clip(0.2,-0.05) rectangle (0.3,0.05);
    \fill[color=zzttqq,fill=zzttqq,fill opacity=0.1] (0,0) -- (0.25,0) -- (0.25,0.25) -- (0,0.25) -- cycle;
    \draw [domain=0.2:0.3] plot(\x,{(--0.25-0*\x)/1});
    \draw (0.25,-0.05) -- (0.25,0.05);
    \draw [color=qqwwff,domain=0.2:0.3] plot(\x,{(--0.45-1.2*\x)/1});
    \draw [color=ffwwqq,domain=0.2:0.3] plot(\x,{(--0.23-1*\x)/1});
    \draw [color=ffqqqq](1.64,0.17) node[anchor=north west] {$levelset \, = \,0.23$};
    \draw [color=zzttqq] (0,0)-- (0.25,0);
    \draw [color=zzttqq] (0.25,0)-- (0.25,0.25);
    \draw [color=zzttqq] (0.25,0.25)-- (0,0.25);
    \begin{scriptsize}
    \fill [color=zzqqzz] (1.1,-0.87) circle (1.5pt);
    \fill [color=qqzztt] (0.23,0) circle (2.0pt);
    \draw[color=qqzztt] (0.235,0.005) node {$\bm{S}$};
    \fill [color=ffqqqq] (0.25,-0.02) circle (2.0pt);
    \draw[color=ffqqqq] (0.255,-0.015) node {$\bm{F}$};
    \end{scriptsize}
    \end{tikzpicture}
\end{subfigure}
\caption{Graphical resolution of the $2\times 2$ LP problem. polytope of constraint (left) and a zoom (right). $\bm{F}$ is the erroneous solution and $\bm{S}$ is the correct one.}
\label{fig:lp2x2corrected}
\end{figure}
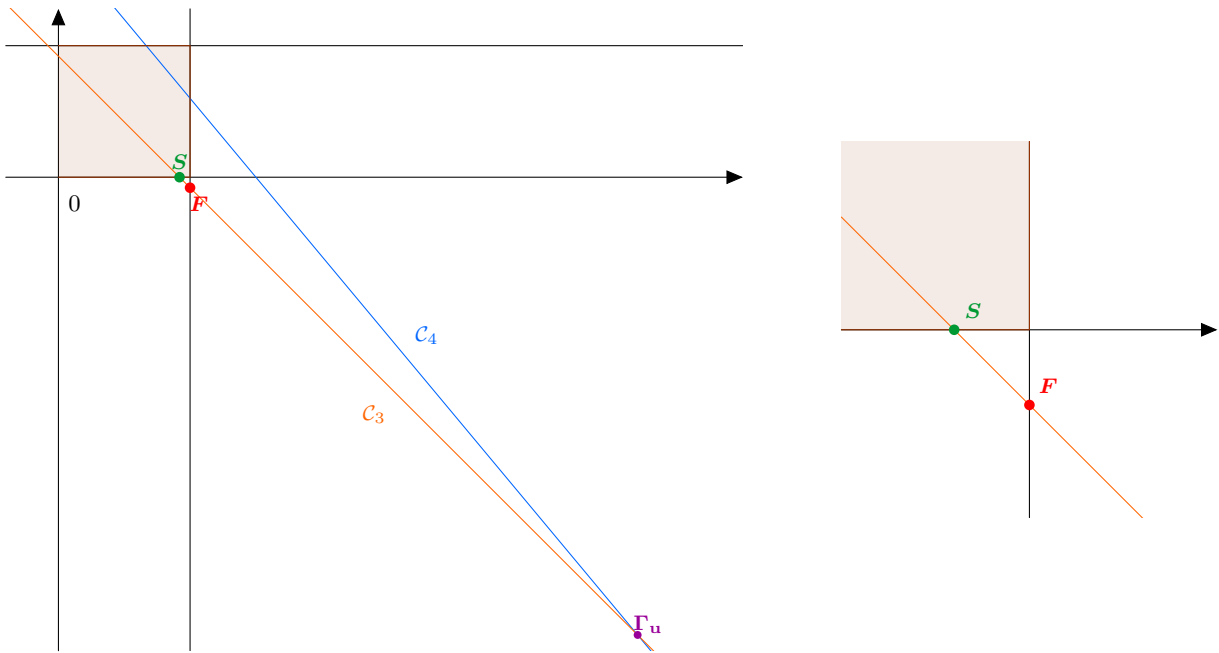
\end{remark}

\bibliographystyle{abbrv} 
{\footnotesize
\bibliography{trafic_BFNP}}

\end{document}